\documentclass[11pt,a4paper,twoside,reqno]{amsart}
\usepackage[T1]{fontenc}
\usepackage{lmodern}
\usepackage[a4paper,inner=31mm,outer=34mm,top=30mm,bottom=32mm,headsep=8mm]{geometry}
\usepackage{amsmath,amssymb,mathtools,bm}
\usepackage[expansion=false]{microtype}
\usepackage[dvipsnames]{xcolor}
\usepackage[english]{babel}
\usepackage{hyperref}
\usepackage[nameinlink,capitalise,noabbrev]{cleveref}
\hypersetup{colorlinks=true,linkcolor=red,citecolor=ForestGreen,
urlcolor=blue,pdfborder={0 0 0},bookmarksopen=true,bookmarksnumbered=true,
pdftitle={Arithmetic Nonexistence Conditions for Tight Spherical $5$-Designs},
pdfsubject={Dual lattices, Gauss sums, and a symmetric-square obstruction}}
\newtheorem{theorem}{Theorem}[section]
\newtheorem{lemma}[theorem]{Lemma}
\theoremstyle{definition}
\newtheorem{definition}[theorem]{Definition}
\theoremstyle{remark}

\numberwithin{equation}{section}
\newcommand{\Z}{\mathbb Z}
\newcommand{\Q}{\mathbb Q}
\newcommand{\R}{\mathbb R}
\newcommand{\F}{\mathbb F}
\newcommand{\ii}{\mathrm i}
\title[Arithmetic Nonexistence Conditions for Tight Spherical $5$-Designs]
{Arithmetic Nonexistence Conditions for Tight Spherical $5$-Designs}

\author{Zili Xu}
\address{School of Mathematical Sciences,  Key Laboratory of MEA (Ministry of Education), Shanghai Key Laboratory of PMMP, and Nantong Institute for Applied Mathematics and Artificial Intelligence, East China Normal University, Shanghai 200241, China}
\email{zlxu@math.ecnu.edu.cn}

\begin{document}
\begin{abstract}
We prove two arithmetic nonexistence conditions for tight spherical $5$-designs in dimension $(2m+1)^2-2$. Let $m$ be a positive even integer satisfying $\nu_2(m(m+1))\leq5$, where $\nu_p(a)$ denotes the exponent of the prime $p$ in the positive integer $a$. We show that such a design does not exist if either (i) $m\equiv1\pmod3$ and $\nu_p(m(m+1))\leq1$ for every prime $p\geq7$ with $p\equiv3\pmod4$, or (ii) $m\equiv0\pmod3$, $\nu_3(m(m+1))=1$, and $\nu_p(m(m+1))\leq1$ for every prime $p\geq5$ with $p\not\equiv1\pmod{12}$. Together, these conditions recover the corresponding nonexistence results of Bannai, Munemasa, and Venkov and of Nebe and Venkov, and exclude parameters not covered by either earlier result, including $m=16,40,88,96,100$. The proofs use lattice theory to constrain the discriminant groups of the associated lattices, and derive contradictions through Gauss sums and determinant congruences.
\end{abstract}
\maketitle

\section{Introduction}\label{sec:introduction}
\subsection{Tight spherical \texorpdfstring{$5$}{5}-designs}
A finite nonempty set $D$ on the unit sphere $\mathbb{S}^{n-1}\subset\R^n$ is a \emph{spherical $t$-design} if
\[
\frac1{|D|}\sum_{\bm{x}\in D}f(\bm{x})
=\int_{\mathbb{S}^{n-1}}f(\bm{x})\,d\sigma(\bm{x})
\]
for every real polynomial $f$ of total degree at most $t$. Here $\sigma$ is surface measure normalized to have total mass $1$. Delsarte, Goethals, and Seidel proved that \cite{DGS}
\[
|D|\geq
\begin{cases}
\displaystyle\binom{n+k-1}{k}+\binom{n+k-2}{k-1},&t=2k,\\[6pt]
\displaystyle 2\binom{n+k-1}{k},&t=2k+1.
\end{cases}
\]
A spherical $t$-design is called \emph{tight} if equality holds in the corresponding bound \cite[Definition~5.13]{DGS}. In particular, a tight spherical $5$-design has $|D|=n(n+1)$, and such a design is antipodal, i.e., $\bm{x}\in D$ implies $-\bm{x}\in D$. Its antipodal pairs determine $n(n+1)/2$ equiangular lines, attaining the absolute bound in dimension $n$. For a survey of spherical designs, we refer to \cite{BBsurvey}. Related work includes constructions \cite{BannaiOrbits,HardinSloane}, asymptotic existence bounds \cite{BRV}, and energy questions \cite{CohnKumar}.

When $n\geq3$, tight spherical $5$-designs are known to exist in dimensions $3$, $7$, and $23$, with respectively $12$, $56$, and $552$ points, or $6$, $28$, and $276$ equiangular lines \cite[Example~8.3]{DGS}. These designs are unique up to orthogonal equivalence in their respective dimensions. See \cite[Example~5.16]{DGS} for dimension $3$ and the account in \cite[Section~1, p.~610]{BMV} for dimensions $7$ and $23$. The uniqueness in dimension $7$ also follows from \cite{BannaiSloane}, and the associated $276$-line system is studied through its regular two-graph in \cite{GoethalsSeidel}. For $n>3$, the necessary dimension condition is
\[
n=(2m+1)^2-2,\qquad m\in\Z_{>0}.
\]
See \cite{BMV} and the foundational tight-design restrictions in \cite{BDI,BDII,Bannai79}. The known designs in dimensions $7$ and $23$ correspond to $m=1$ and $m=2$.

The lattice method provides further nonexistence results within this necessary dimension family. Bannai, Munemasa, and Venkov proved nonexistence in dimension $47$, corresponding to $m=3$ \cite{BMV}. They also proved that no tight spherical $5$-design exists for
\[
m=2k,\qquad k\equiv2\pmod3,
\qquad k\text{ and }2k+1\text{ squarefree}
\]
\cite[Theorem~3.10]{BMV}. Here an integer is squarefree if no prime square divides it. In particular, $m=4$ is excluded, giving nonexistence in dimension $79$. Nebe and Venkov subsequently proved that no tight spherical $5$-design exists if $m$ is even, $8\nmid m$,  $m(m+1)$ is not divisible by the square of any odd prime, and $m\equiv0,1\pmod3$ \cite[Theorem~4.6]{NV}. In particular, it excludes $m=6$, as  recorded in \cite[Corollary~4.7]{NV}.

The common structure behind these arguments is an integral lattice generated by a suitably scaled half of the antipodal design, in the tradition of lattice and design methods \cite{NeumaierSeidel,Venkov,BachocVenkov}. The spherical moment identities restrict norms in its dual lattice. These restrictions in turn constrain the primary subgroups and squared norms of representatives in an associated finite quotient. The Milgram formula relates the resulting Gauss sum to the dimension modulo $8$.

There is also a graph-theoretic proof of the nonexistence of tight spherical $5$-designs in dimension $47$. If such a design exists, then there exists a strongly regular graph with parameters $(1127,486,165,243)$. Makhnev proved that a graph with these parameters cannot exist because the neighborhood of each vertex would be a strongly regular graph with parameters $(486,165,36,66)$, which he ruled out \cite{Makhnev}. The connection between equiangular lines and graph-theoretic structures is discussed in \cite{LemmensSeidel}.

\subsection{Our contributions}
We extend the arithmetic condition above in the two residue classes $m\equiv1\pmod3$ and $m\equiv0\pmod3$. For a prime $p$ and for a rational number $a=p^ku/v$ where $u,v$ are relatively prime integers with $p\nmid u$ and $p\nmid v$, we denote $\nu_p(a)=k$.

\begin{theorem}\label{thm:main}
Let $m$ be a positive even integer satisfying
\begin{gather*}
m\equiv1\pmod3,\qquad \nu_2(m(m+1))\leq5,\\
\nu_p(m(m+1))\leq1
\quad\text{for every prime }p\geq7\text{ with }p\equiv3\pmod4.
\end{gather*}
Then there is no tight spherical $5$-design in dimension $(2m+1)^2-2$.
\end{theorem}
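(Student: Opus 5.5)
We argue by contradiction, following the lattice method of Bannai--Munemasa--Venkov and Nebe--Venkov. Suppose a tight spherical $5$-design $D\subset\mathbb S^{n-1}$ exists with $n=(2m+1)^2-2$, and fix a half $X\subset D$ of it, one point from each antipodal pair, so $|X|=n(n+1)/2$. The inner products between distinct points of $X$ are $\pm\alpha$ with $\alpha=1/(2m+1)$. We take $L$ to be the lattice generated by $\sqrt{2m+1}\,X$, rescaled so that it is integral. The moment identities $\sum_{x\in X}(x\cdot y)^2=\frac{|X|}{n}(y\cdot y)$ and $\sum_{x\in X}(x\cdot y)^4=\frac{3|X|}{n(n+2)}(y\cdot y)^2$ hold for every $y\in\R^n$.

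\textbf{Step 1: restrictions on $L^\#/L$.} For $y\in L^\#$, each $x\cdot y$ is an integer after the scaling. The two moment identities then give explicit congruences that $y\cdot y$ must satisfy, and a weighted count of the values $x\cdot y$ bounds which norms can occur. From these we expect to show:
\begin{itemize}
\item $L^\#/L$ has exponent dividing a small multiple of $m(m+1)$;
\item for each prime $p\mid m(m+1)$, the norms $y\cdot y \bmod 2\Z$ or $\bmod \Z$ on the $p$-primary part are forced into a specific form, namely a fixed rational multiple of $\frac{1}{p^{\nu_p}}$ times a square class;
\item the $p$-part of $L^\#/L$ is isotropic or has rank governed by $\nu_p(m(m+1))$.
\end{itemize}
The hypotheses $\nu_p\le1$ for $p\equiv3\pmod4$, $p\ge7$, and $\nu_2\le5$ are exactly what make these primary parts elementary or otherwise rigid. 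In that case the discriminant form on each such part is a scalar multiple of a diagonal form $\langle c/p\rangle^{r_p}$, with $c$ determined by $m$.

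\textbf{Step 2: Gauss sums and the determinant.} We apply Milgram's formula,
\[
\sum_{\bar y\in L^\#/L} e^{\pi\ii\, y\cdot y}=\sqrt{|L^\#/L|}\,e^{2\pi\ii n/8},
\]
and compute the left side prime by prime. For $p\equiv1\pmod4$ the local Gauss sums are real and equal to $\pm\sqrt{p}^{\,r}$, so they contribute only signs. For $p\equiv3\pmod4$ they contribute $\pm\ii$ factors. The hypothesis on these primes lets us write each such factor as a Legendre symbol $\left(\frac{c}{p}\right)$ raised to the rank $r_p$. The $3$-part is handled using $m\equiv1\pmod3$: then $2m+1\equiv0\pmod3$ and $m(m+1)\equiv2\pmod3$, which makes the $3$-component trivial or explicitly computable. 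The $2$-part is handled by enumerating the possible $2$-adic discriminant forms allowed by Step 1 when $\nu_2\le5$, using that $m$ is even. Since $n=(2m+1)^2-2\equiv7\pmod8$, the right side is $\sqrt{|L^\#/L|}\,e^{7\pi\ii/4}$. We will then show that the product of local factors cannot equal this phase. Any residual ambiguity, such as the parity of the ranks, is fixed by a congruence for $\det L$ modulo squares: $\det$ of the Gram matrix of a basis of $L$, evaluated modulo $p$ or $8$, relates $r_p$ to $n$.

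\textbf{Main obstacle.} We expect the hardest part to be the $2$-primary analysis. With $\nu_2$ up to $5$, the $2$-part need not be elementary, and there are many Jordan decompositions to consider. The plan is to first use the norm restriction to show that every element of order $2^k$ has norm in a specific coset modulo $2$. That pins the $2$-adic form down to a short list, and we would then compute each $2$-adic Gauss sum directly. A second difficulty is making the prime-$p$ ranks $r_p$ explicit enough for the Legendre-symbol product to be evaluated. For this we would compare $\dim_{\F_p}(L^\#/L)_p$ with the multiplicity bounds obtained from the design's Gram matrix, whose eigenvalues are known explicitly.
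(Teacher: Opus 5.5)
Your outline has a genuine gap, and it sits exactly where the paper's argument does its work. First, Milgram's formula does not apply to your lattice. The lattice generated by $\sqrt{2m+1}\,X$ is the paper's $\Lambda$, whose generators have odd norm $d=2m+1$. Replacing $y\in L^\#$ by $y+\bm x$ with $\bm x\in X$ therefore multiplies $e^{\pi\ii\,y\cdot y}$ by $e^{\pi\ii d}=-1$, so the sum over $L^\#/L$ is not even defined. The paper passes to the index-$2$ sublattice $\Lambda_+$ (even coefficient sum) and the even lattice $\Gamma=\Lambda_+/\sqrt2$. Then $\Gamma^*/\Gamma$ is the disjoint union of $\sqrt2\Lambda^*/\Gamma$ and its translate by $\bm x_0/\sqrt2$, whose norms are shifted by $d/2$ modulo $2\Z$. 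This gives $G_\Gamma(\Gamma^*/\Gamma)=G_\Gamma(\sqrt2\Lambda^*/\Gamma)\,(1+e^{\pi\ii d/2})/\sqrt2$. For even $m$ one has $d\equiv1\pmod4$ and $n\equiv7\pmod8$, so Milgram forces $G_\Gamma(\sqrt2\Lambda^*/\Gamma)=(1-\ii)/(1+\ii)=-\ii$. The contradiction is then simply that every primary factor of this Gauss sum is real. Your proposal never isolates this non-real target, and the sentence ``we will then show that the product of local factors cannot equal this phase'' carries the entire proof without an argument.

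The second missing ingredient is the one that controls the $2$-part, which you leave as the main obstacle and propose to attack by enumerating $2$-adic Jordan decompositions. No classification is needed. Take $\bm\beta=\bm\alpha+\bm x_0/2$ for $\bm\alpha\in\Lambda^*$; all $\langle\bm x,\bm\beta\rangle$ lie in $\frac12+\Z$, so $\frac1{24}\sum_{\bm x\in X}(\langle\bm x,\bm\beta\rangle^2-\frac14)(\langle\bm x,\bm\beta\rangle^2-\frac94)$ is an integer. The moment identities turn this into $\frac{m(m+1)t(3t-d)}{12}\in\Z$ with $t=\langle\bm\alpha,\bm\alpha\rangle$. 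If $\nu_2(t)=-h<0$, this integer has $2$-adic valuation $\nu_2(m(m+1))-2h-2$, so $\nu_2(m(m+1))\le5$ forces $h\le1$. Hence $2\langle\bm\alpha,\bm\alpha\rangle\in\Z$ on the $2$-primary part of $\sqrt2\Lambda^*/\Gamma$, and its Gauss sum is a sum of $\pm1$'s. The bare fourth-moment condition $6m(m+1)t^2\in\Z$ only gives $h\le3$, which does not suffice.

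Likewise, for primes $p\equiv3\pmod4$ with $p\ge7$ you need no ranks or Legendre symbols. The conditions $\nu_p(m(m+1))\le1$ and $6m(m+1)t^2\in\Z$ make all norms on the $p$-part integral, and nondegeneracy of the pairing then forces that part to be trivial (as you correctly note for $p=3$). For $p\equiv1\pmod4$, reality follows because multiplication by a square root of $-1$ modulo $p^{2a}$ permutes the $p$-part and conjugates its Gauss sum.
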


\begin{theorem}\label{thm:second}
Let $m$ be a positive even integer satisfying
\begin{gather*}
m\equiv0\pmod3,\qquad \nu_3(m(m+1))=1,
\qquad \nu_2(m(m+1))\leq5,\\
\nu_p(m(m+1))\leq1
\quad\text{for every prime }p\geq5\text{ with }p\not\equiv1\pmod{12}.
\end{gather*}
Then there is no tight spherical $5$-design in dimension $(2m+1)^2-2$.
\end{theorem}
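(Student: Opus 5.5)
We follow the lattice method of Bannai–Munemasa–Venkov and Nebe–Venkov; \cref{thm:second} parallels \cref{thm:main}, and the new input is a more careful analysis of the discriminant group, in particular its $2$-part and its $p$-parts for primes $p\equiv 1\pmod 4$ (resp.\ $p\equiv1\pmod{12}$).

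\textbf{Setup.} Suppose a tight spherical $5$-design $D\subset\mathbb S^{n-1}$ exists, with $n=(2m+1)^2-2$. Choose one vector from each antipodal pair and rescale by $\sqrt{m(m+1)}$-type factors as in \cite{BMV}. The inner products are $\pm 1/(2m+1)$, and the vectors generate an integral lattice $L$ whose norms are controlled by $m$ and $m+1$. The design identity $\sum_{x\in D}(x\cdot y)^2=\tfrac{|D|}{n}\,y\cdot y$ and its degree-$4$ analogue imply that $m(m+1)L^\#\subseteq L$, up to a fixed small power of $2$. They also restrict the norms of vectors of $L^\#$ modulo small integers. Hence the exponent of $A=L^\#/L$ divides a bounded multiple of $m(m+1)$. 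With $m$ even, $m$ and $m+1$ coprime, and $2m+1$ coprime to both, the discriminant form decomposes into $p$-primary parts $A_p$.

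\textbf{Local structure.} For a prime $p$ with $\nu_p(m(m+1))\le1$, the group $A_p$ is elementary abelian. Its rank parity and the discriminant of the induced quadratic form on $A_p$ over $\mathbb F_p$ are then forced by $\det L$ and by the norm restrictions; as in \cite{NV}, $\det L$ is computed from the Gram matrix of $D$. Primes $p\equiv1\pmod4$ (resp.\ $\equiv1\pmod{12}$) contribute Gauss sums whose phases are independent of the unknown discriminant, since $-1$ (resp.\ also $3$) is a square modulo $p$. This is exactly why those primes may occur to higher powers in the hypotheses. For $p=3$ in \cref{thm:second}, the condition $\nu_3=1$ makes $A_3$ elementary, and its contribution is again computable. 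For $p=2$, the bound $\nu_2(m(m+1))\le5$ bounds the exponent of $A_2$ by $2^5$ (times a fixed factor). The norm constraints, that is, the values of $q(x)\in\Q/2\Z$ on $A_2$, then restrict $A_2$ to a finite list of Jordan-component types. For each type we record its Gauss sum, an eighth root of unity.

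\textbf{Contradiction.} Apply Milgram's formula:
\[
\sum_{x\in A}e^{\pi\ii q(x)}=\sqrt{|A|}\,e^{2\pi\ii n/8}.
\]
Here $n\equiv(2m+1)^2-2\equiv-1\pmod 8$. The left side factors as a product over the $p$-parts. For each case of $m \bmod 3$ and of $m \bmod 2^6$ (with $m$ even), compute the product of the local phases using quadratic reciprocity. The resulting phase should disagree with $e^{-2\pi\ii/8}$, or else the resulting determinant congruence should contradict $\det L$ modulo a small square class. The congruence $m\equiv1$ or $0\pmod3$ enters through the Legendre symbol of $2m+1$ at $3$.

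\textbf{Main obstacle.} The hardest step is the $2$-adic analysis: classifying the possible $A_2$ compatible with the norm restrictions when $\nu_2$ is as large as $5$, and checking that every case yields a phase mismatch. I would first attempt to show that $A_2$ is homogeneous of a single scale $2^{\nu_2(m)}$ using the evenness or oddness pattern of $m(m+1)x\cdot x$. I would then enumerate odd and even Jordan blocks modulo $8$, combined with a determinant congruence modulo $16$ to eliminate residual cases.
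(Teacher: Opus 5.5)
\textbf{There is a genuine gap: your contradiction step fails, and the idea the paper actually needs is missing.}

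\textbf{Why the phase comparison cannot work.} A side point first: the lattice generated by the scaled design has odd norm $d$. Milgram's formula therefore has to be applied to the even lattice $\Gamma=\Lambda_+/\sqrt2$, which gives $G_\Gamma(\sqrt2\Lambda^*/\Gamma)=-\ii$. Under the hypotheses, the $3$-part of $\sqrt2\Lambda^*/\Gamma$ is either trivial or $\Z/3\Z$ with nonzero classes of norm $\langle\bm\alpha,\bm\alpha\rangle\in\tfrac13+\Z$. In the latter case its Gauss sum is $(1+2e^{2\pi\ii/3})/\sqrt3=\ii$, and every other primary Gauss sum is $\pm1$. So Milgram only forces the $3$-part to be nontrivial and the real factors to multiply to $-1$. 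That is attainable in exactly the cases the theorem adds:
\begin{itemize}
\item For $m=96$ (where $\nu_2(m(m+1))=5$), the norm constraints allow a $2$-part $(\Z/2\Z)^2$ with all nonzero norms in $\tfrac12+\Z$. Its Gauss sum is $-1$.
\item For $m=168$ (where $m+1=169=13^2$), the $2$-part is forced to be trivial. But a $13$-part $\Z/13\Z$ with norm $c/13$, $c$ a nonresidue, has Gauss sum $-1$.
\end{itemize}
So no phase mismatch exists in these cases. Your $2$-adic Jordan-type enumeration cannot manufacture one, and it is not the real obstacle. Also, your claim that the $p\equiv1\pmod4$ Gauss sums are independent of the discriminant is wrong: they are real, but their sign depends on the form. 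Your fallback, a determinant congruence computed from ``the Gram matrix of $D$'', has no source. That matrix is a singular $N\times N$ matrix with an unknown $\pm1$ pattern.

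\textbf{The missing ingredient} is a second, design-specific computation of $(\det\Lambda)/3$ modulo $3$.
\begin{itemize}
\item Once the $3$-part has order $3$, $\nu_3(\det\Lambda)=1$.
\item A $3$-integral diagonalization gives orthogonal $\bm v_1,\ldots,\bm v_n\in\Lambda$ with norms $3u_1,u_2,\ldots,u_n$, where $3\nmid u_j$.
\item Applying \cref{lem:J} to $\frac{h}{3u_1}\bm v_1\in\Lambda^*$ gives $u_1\equiv1\pmod3$.
\item The crucial step is \cref{lem:symmetric-square-residue}. The $N$ matrices $\bm x\bm x^{\mathsf T}$, $\bm x\in X$, have Gram matrix $(n+1)\bm I_N+\bm1\bm1^{\mathsf T}\equiv\bm1\bm1^{\mathsf T}\pmod3$. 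Write them in the basis $\bm v_j\bm v_j^{\mathsf T}$, $\bm v_i\bm v_j^{\mathsf T}+\bm v_j\bm v_i^{\mathsf T}$ and bound the rank modulo $3$. This produces $(N-n+1)/2$ independent vectors with constant pairing $1$ inside the $(N-n)$-dimensional nondegenerate unit part over $\F_3$.
\item That forces $\prod_{j\ge2}u_j\equiv-1$, hence $(\det\Lambda)/3\equiv2\pmod3$.
\end{itemize}
On the other hand, the $2$-part has square order, and every remaining prime factor of $\det\Lambda$ is $\equiv1\pmod{12}$. Together these give $(\det\Lambda)/3\equiv1\pmod3$, a contradiction. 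This is where the condition $p\equiv1\pmod3$ is used, not in any phase computation.
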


Theorem~\ref{thm:main} places no restriction on the valuations of $m(m+1)$ at primes $p\equiv1\pmod4$, and Theorem~\ref{thm:second} places no restriction on those at primes $p\equiv1\pmod{12}$. Both theorems also replace the condition $8\nmid m$ used in the even-parameter condition of Nebe and Venkov by the weaker bound $\nu_2(m(m+1))\leq5$. Hence, the new conditions cover parameter values that are not covered by either of the previous conditions in \cite{NV,BMV}. Among positive even integers $m\leq200$, the values satisfying the hypotheses of Theorem~\ref{thm:main} but not excluded by either previous condition are $ m\in\{16,40,88,100,112,124,136,160,184\}$. The corresponding list for Theorem~\ref{thm:second} is $m\in\{96,168\}$.

\subsection{Organization}
We use the notation $D,X,n,d,m,\Lambda,\Lambda_+,\Gamma$ of \cite{BMV}. Section~\ref{sec:preliminaries} records the elementary facts about dual-lattice cosets and Gauss sums needed in the proofs. Section~\ref{sec:lattice} records the design identities, constructs the even lattice $\Gamma=\Lambda_+/\sqrt2$ and proves the common arithmetic lemmas. Section~\ref{sec:first} proves Theorem~\ref{thm:main} by comparing real primary Gauss sums with a forced nonreal total phase. Section~\ref{sec:second} proves Theorem~\ref{thm:second} by investigating the integer $(\det\Lambda)/3$ modulo $3$.

\section{Preliminaries}\label{sec:preliminaries}

\subsection{Notations}
Let $\ii=\sqrt{-1}$. For a prime $p$, the $p$-adic valuation $\nu_p(a)$ of a nonzero rational number $a$ denotes the exponent of the prime $p$ in its reduced factorization. That is, if $a=p^ku/v$, where $u,v$ are relatively prime integers with $p\nmid u$ and $p\nmid v$, then $\nu_p(a)=k$. Matrices and vectors are written in bold italic type. We write $\bm I_t$ for the identity matrix of order $t$ and $\bm0$ for a zero vector or zero matrix of the indicated size. Inner products of real vectors are ordinary Euclidean inner products. For vectors $\bm v_1,\ldots,\bm v_r\in\R^d$, their Gram matrix is $(\langle\bm v_i,\bm v_j\rangle)_{i,j=1}^r\in\mathbb{R}^{r\times r}$. For matrices $\bm U=(U_{i,j})_{i,j=1}^n,\bm V=(V_{i,j})_{i,j=1}^n\in\R^{n\times n}$, define the entrywise inner product by
\[
\langle\bm U,\bm V\rangle_{\mathrm F}
:=\sum_{i=1}^n\sum_{j=1}^n U_{i,j}V_{i,j}.
\]
The Gram matrix of matrices $\bm U_1,\ldots,\bm U_r\in\R^{n\times n}$ is $(\langle\bm U_i,\bm U_j\rangle_{\mathrm F})_{i,j=1}^r\in\mathbb{R}^{r\times r}$.

For an integer $k$, write $k\Z=\{kr:r\in\Z\}$. If $k\ne0$, write $\frac1k\Z=\{\frac{r}{k}:r\in\Z\}$. For $a\in\Q$, the notation $a+\Z$ means the set $\{a+k:k\in\Z\}$. Hence, $t\in a+\Z$ means  that $t-a\in\Z$. For a set $A$, we use $A=S\sqcup T$ to represent a partition of $A$, where $S\cap T=\emptyset$ and $S\cup T=A$.

For an additive group $C$ and a subgroup $H$, the quotient $C/H$ consists of cosets $x+H=\{x+h:h\in H\}$, with $(x+H)+(y+H)=x+y+H$. For a finite abelian group $C$ and a prime $p$, define its Sylow $p$-subgroup by
\[
\operatorname{Syl}_p(C)=\{x\in C:p^a x=0\text{ for some integer }a\geq0\}.
\]
The order of $\operatorname{Syl}_p(C)$ is the largest power of $p$ dividing $|C|$. Each element of $C$ has a unique expression as a sum of elements from these Sylow subgroups.

\subsection{Lattice theory}

A   lattice in $\R^n$ has the form
\[
L=\left\{\sum_{j=1}^n c_j\bm{v}_j:c_j\in\Z\right\}
\]
with linearly independent vectors $\bm{v}_1,\ldots,\bm{v}_n\in\mathbb{R}^n$. Its rank is $n$. Its Gram matrix is $\bm{B}=(\langle\bm{v}_i,\bm{v}_j\rangle)_{i,j=1}^n\in\mathbb{R}^{n\times n}$, and its determinant is $\det L:=\det \bm{B}$. A lattice $L$ is \emph{integral} if $\langle\bm{v},\bm{w}\rangle\in\Z$ for every $\bm{v},\bm{w}\in L$, and \emph{even} if $\langle\bm{v},\bm{v}\rangle\in2\Z$ for every $\bm{v}\in L$. Its dual lattice is
\[
L^*=\{\bm{\alpha}\in\R^n:
\langle\bm{\alpha},\bm{v}\rangle\in\Z,
\forall\bm{v}\in L\}.
\]
If $L_0\subseteq L$ is a sublattice of $L$, then
\begin{equation}\label{eq:lattice-determinant-rules}
\det L_0=|L/L_0|^2\det L,
\end{equation}
where $|L/L_0|$ denotes the number of cosets in $L/L_0$. For an integral lattice $L$, we have $L\subseteq L^*$ and $|L^*/L|=\det L$.

The following lemmas are useful for our argument.

\begin{lemma}\label{lem:integer-inner-products}
Let $L$ be an integral lattice in $\R^n$, and let $H\subseteq L^*/L$ be a finite subgroup. If $\bm v+L\in\operatorname{Syl}_p(H)$ and $\bm w+L\in\operatorname{Syl}_r(H)$ for distinct primes $p,r$, then $\langle\bm v,\bm w\rangle\in\Z$.
\end{lemma}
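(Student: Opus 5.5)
The plan is to use the orders of the two cosets and the integrality of pairings of $L^*$ with $L$. First, check that the inner product of a dual vector with a lattice vector is an integer. Since $\bm v+L$ lies in the Sylow $p$-subgroup of the finite group $H$, some power $p^a$ satisfies $p^a\bm v\in L$. Likewise $r^b\bm w\in L$ for some $b\geq0$. Because $\bm v,\bm w\in L^*$, the definition of the dual lattice gives
\[
p^a\langle\bm v,\bm w\rangle=\langle p^a\bm v,\bm w\rangle\in\Z,
\qquad
r^b\langle\bm v,\bm w\rangle=\langle\bm v,r^b\bm w\rangle\in\Z .
\]
Here $p^a\bm v\in L$ is paired with $\bm w\in L^*$, and symmetrically for the second product.

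Next, combine the two integrality statements. Set $t=\langle\bm v,\bm w\rangle$. Since $p\neq r$ are primes, $\gcd(p^a,r^b)=1$, so there are integers $x,y$ with $xp^a+yr^b=1$ (Bézout). Then
\[
t=x(p^at)+y(r^bt)\in\Z,
\]
which is the claim.

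There is no real obstacle here. The only point requiring care is the bookkeeping: the inner product $\langle\bm v,\bm w\rangle$ depends on the chosen representatives, not only on the cosets. The argument above works for arbitrary representatives $\bm v,\bm w\in L^*$ of the given cosets, because it uses only $\bm v,\bm w\in L^*$ and $p^a\bm v,r^b\bm w\in L$. So no well-definedness issue arises. Integrality of $L$ is used only to make sense of $L^*/L$, that is, to have $L\subseteq L^*$.
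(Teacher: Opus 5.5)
Your proposal is correct and follows the same argument as the paper: $p^a\bm v\in L$ and $r^b\bm w\in L$ make $p^a t$ and $r^b t$ integers, and a B\'ezout relation $xp^a+yr^b=1$ then gives $t\in\Z$. Your remark that the argument holds for any representatives in $L^*$ is also right, and the paper implicitly relies on this when it later applies the lemma to every representative of a class.
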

\begin{proof}
By the definition of the Sylow subgroups, there are integers $a,b\geq0$ such that $p^a\bm v\in L$ and $r^b\bm w\in L$. Denote $t=\langle\bm v,\bm w\rangle$. Since $\bm v,\bm w\in L^*$, both $p^at=\langle p^a\bm v,\bm w\rangle$ and $r^bt=\langle\bm v,r^b\bm w\rangle$ are integers. The distinct primes $p,r$ make $p^a$ and $r^b$ relatively prime. Choose integers $u,v$ with $up^a+vr^b=1$. Then $t=u(p^at)+v(r^bt)\in\Z$.
\end{proof}

\begin{lemma}\label{lem:integral-norm-group}
Let $L$ be an even lattice and let $H\subseteq L^*/L$ be a finite subgroup. Suppose that for any $\bm v+L\in H\setminus\{L\}$, there exists $\bm w+L\in H$ such that $\langle\bm v,\bm w\rangle\notin\Z$. Suppose that $\langle\bm v,\bm v\rangle\in\Z$ for every $\bm v+L\in H$. Then $|H|=4^s$ for some integer $s\geq0$.
\end{lemma}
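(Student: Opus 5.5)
The plan is to turn $H$ into a finite-dimensional vector space over $\F_2$ carrying a nondegenerate alternating form. Such a space has even dimension, which gives $|H|=2^{2s}=4^s$.

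\emph{The pairing on $H$.} For $\bm v+L,\bm w+L\in H$, set $b(\bm v+L,\bm w+L)=\langle\bm v,\bm w\rangle+\Z\in\Q/\Z$. This is well defined. Changing $\bm v$ by $\bm l\in L$ changes $\langle\bm v,\bm w\rangle$ by $\langle\bm l,\bm w\rangle\in\Z$, since $\bm w\in L^*$, and symmetrically in $\bm w$. The pairing $b$ is symmetric and $\Z$-bilinear. In this language, the first hypothesis says exactly that $b$ is nondegenerate on $H$.

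\emph{Values of $b$ and exponent $2$.} Take $\bm v+L,\bm w+L\in H$. Then $\bm v+\bm w+L\in H$, so $\langle\bm v+\bm w,\bm v+\bm w\rangle$, $\langle\bm v,\bm v\rangle$ and $\langle\bm w,\bm w\rangle$ are all integers by the second hypothesis. Expanding the first of these gives $2\langle\bm v,\bm w\rangle\in\Z$. Hence $b$ takes values in $\{0,\tfrac12\}+\Z$.

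Consequently, for every $x\in H$ and every $y\in H$ we have $b(2x,y)=2b(x,y)=0$. Nondegeneracy then forces $2x=0$. So $H$ is an elementary abelian $2$-group, i.e.\ an $\F_2$-vector space of some dimension $k$ with $|H|=2^k$. Identifying $\{0,\tfrac12\}+\Z$ with $\F_2$, the map $b$ becomes an $\F_2$-bilinear form. It is nondegenerate, and it is alternating because $b(x,x)=\langle\bm v,\bm v\rangle+\Z=0$ by the second hypothesis.

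\emph{Even dimension.} It remains to show that a nondegenerate alternating form on a finite-dimensional $\F_2$-space forces even dimension. I will argue by induction on $k$; the case $k=0$ gives $s=0$.

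If $H\neq0$, pick $x\neq0$ and, by nondegeneracy, some $y$ with $b(x,y)=1$. Since $b(x,x)=0$, the vectors $x,y$ are independent, and the form restricted to $P=\langle x,y\rangle$ has Gram matrix $\begin{pmatrix}0&1\\1&0\end{pmatrix}$, which is invertible. Therefore $H=P\oplus P^{\perp}$. The restriction of $b$ to $P^\perp$ is again alternating and nondegenerate. By induction $\dim P^\perp$ is even, so $k$ is even. Alternatively, one can note that the Gram matrix of $b$ is skew-symmetric and invertible over $\F_2$ with zero diagonal, and cite the standard symplectic-basis theorem directly.

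The only step that needs care is deriving exponent $2$ from nondegeneracy. The rest is routine, and evenness of $L$ is needed only implicitly, to make the setting consistent.
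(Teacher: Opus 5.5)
Your proof is correct and is essentially the paper's argument: both use $2\langle\bm a,\bm b\rangle\in\Z$, pick a pair $\bm u+L,\bm v+L$ with $\langle\bm u,\bm v\rangle\in\frac12+\Z$, pass to its orthogonal complement (the paper's $K_1$, your $P^{\perp}$), which has index $4$ and inherits the nondegeneracy hypothesis, and induct. Your preliminary exponent-$2$ step only recasts this as the standard symplectic-basis fact over $\F_2$. The paper does not need that step, because it gets the index-$4$ count directly by showing that the four pairing classes $K_1,\ldots,K_4$ are translates of $K_1$.
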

\begin{proof}
We prove by induction on $|H|$. If $H=\{L\}$, then $|H|=4^s$ with $s=0$. Suppose now that $|H|>1$. For any $\bm a+L,\bm b+L\in H$, we have
\begin{equation}\label{xueq21}
2\langle\bm a,\bm b\rangle
=\langle\bm a+\bm b,\bm a+\bm b\rangle
-\langle\bm a,\bm a\rangle
-\langle\bm b,\bm b\rangle\in\Z.
\end{equation}
Choose $\bm u\in L^*$ with $\bm u+L\in H\setminus\{L\}$. By the first hypothesis there is $\bm v+L\in H$ such that $\langle\bm u,\bm v\rangle\notin\Z$. Equation \eqref{xueq21} then gives $\langle\bm u,\bm v\rangle\in\frac12+\Z$. The cosets $\bm u+L$ and $\bm v+L$ are distinct, since their squared norms are integers. Define four subsets of $H$ by
\[
\begin{aligned}
K_1&=\{\bm a+L\in H:
\langle\bm a,\bm u\rangle\in\Z,
\ \langle\bm a,\bm v\rangle\in\Z\},\\
K_2&=\{\bm a+L\in H:
\langle\bm a,\bm u\rangle\in\tfrac12+\Z,
\ \langle\bm a,\bm v\rangle\in\Z\},\\
K_3&=\{\bm a+L\in H:
\langle\bm a,\bm u\rangle\in\Z,
\ \langle\bm a,\bm v\rangle\in\tfrac12+\Z\},\\
K_4&=\{\bm a+L\in H:
\langle\bm a,\bm u\rangle\in\tfrac12+\Z,
\ \langle\bm a,\bm v\rangle\in\tfrac12+\Z\}.
\end{aligned}
\]
Equation~\eqref{xueq21} shows that every $\bm a+L\in H$ belongs to exactly one of $K_1,K_2,K_3,K_4$. Note that if $\bm a+L\in K_2$, then $\langle\bm a-\bm v,\bm u\rangle\in\Z$ and $\langle\bm a-\bm v,\bm v\rangle\in\Z$, so $\bm a-\bm v+L\in K_1$. In turn, if $\bm a+L\in K_1$, then $\bm a+\bm v+L\in K_2$. This means that $K_2=(\bm v+L)+K_1$. Similarly, we can show that $K_3=(\bm u+L)+K_1$ and $K_4=(\bm u+\bm v+L)+K_1$. Since translation is a bijection on $H$, we have
\begin{equation*}
|K_1|=|K_2|=|K_3|=|K_4|=\frac{|H|}{4}.
\end{equation*}

Note that $K_1$ is a subgroup of $H$. For every $\bm a+L\in K_1$, the squared norm $\langle\bm a,\bm a\rangle$ is an integer because $K_1\subseteq H$. Suppose that $\bm a+L\in K_1$ has integer inner product with every class in $K_1$. By the definition of $K_1$, it also has integer inner product with $\bm u+L$ and $\bm v+L$. The displayed identities for $K_2,K_3,K_4$ then show that $\bm a+L$ has integer inner product with every class in $H$. By the first hypothesis in the lemma, we have $\bm a\in L$. Hence, $K_1$ satisfies this hypothesis as well. Since $|K_1|=|H|/4<|H|$, induction gives $|K_1|=4^{s-1}$ for some integer $s\geq1$. Therefore $|H|=4^s$.
\end{proof}

\begin{definition}\label{def:gauss}
Let $L$ be an even lattice and let $H$ be a subgroup of $L^*/L$. Define its normalized \emph{Gauss sum} by
\begin{equation}\label{eq:gauss}
G_L(H)=\frac1{\sqrt{|H|}}
\sum_{\bm v+L\in H}
\exp\bigl(\pi\ii\langle\bm v,\bm v\rangle\bigr).
\end{equation}
\end{definition}

Let $L$ be an even lattice and let $H$ be a finite subgroup of $L^*/L$. The normalized Gauss sum factors over the primary subgroups, i.e.,
\begin{equation}\label{eq:gauss-primary-product}
G_L(H)=\prod_{p\text{ prime}} G_L(\operatorname{Syl}_p(H)).
\end{equation}
This follows from the orthogonal-sum formula \cite[Section~1.1, p.~206]{TuraevGauss} and the primary decomposition \cite[Section~5.2]{DummitFoote}. Moreover, if an even lattice $L$ has rank $n$, then the Milgram-Braun formula \cite{MH} gives
\begin{equation}\label{eq:milgram}
G_L(L^*/L)=\exp(2\pi\ii n/8).
\end{equation}

The following lemma shows that $G_L(H)$ is real when $p$ is a prime with $p\equiv1\pmod4$. This is the special case needed here of Taylor's result on quadratic Gauss sums over $p$-primary groups; see \cite[Theorem~1.16]{TaylorGauss}. We keep the elementary proof below because it is short and avoids any extra notation. It plays a key role in the proofs of Theorem \ref{thm:main} and Theorem \ref{thm:second}, which allows us to relax the restriction on $m$ obtained in \cite{NV,BMV}.

\begin{lemma}\label{lem:real-gauss}
Let $L$ be an even lattice and $H\subseteq L^*/L$ a subgroup whose order is a power of a prime $p\equiv1\pmod4$. Then $G_L(H)$ is real.
\end{lemma}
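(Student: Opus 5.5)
The plan is to show that $G_L(H)$ equals its own complex conjugate. This will come from an automorphism of $H$ that multiplies every squared norm by a square root of $-1$ modulo $p^a$, where $p^a$ is the exponent of $H$.

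First I would fix the setup. Write $|H|=p^k$, so every class $\bm v+L\in H$ satisfies $p^k\bm v\in L$. Since $L$ is even and $\bm v\in L^*$, the value $\exp(\pi\ii\langle\bm v,\bm v\rangle)$ depends only on the coset $\bm v+L$: for $\bm\ell\in L$ we have $\langle\bm v+\bm\ell,\bm v+\bm\ell\rangle=\langle\bm v,\bm v\rangle+2\langle\bm v,\bm\ell\rangle+\langle\bm\ell,\bm\ell\rangle$, and the last two terms are even integers. Let $N=p^k$ if $p$ is odd. The case $p=2$ cannot occur, since $p\equiv1\pmod4$. Then $N\langle\bm v,\bm v\rangle=\langle N\bm v,\bm v\rangle\in\Z$, so $\langle\bm v,\bm v\rangle\in\frac1N\Z$. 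Moreover $\exp(\pi\ii q)$ for $q\in\frac1N\Z$ depends only on $q$ modulo $2\Z$, and since $N$ is odd the quantity $\exp(\pi\ii\langle\bm v,\bm v\rangle)$ is determined by $\langle\bm v,\bm v\rangle$ modulo $2$.

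Next, since $p\equiv1\pmod4$, the group $(\Z/N\Z)^\times$ is cyclic of order divisible by $4$, so there is an integer $c$ with $c^2\equiv-1\pmod N$. Because $N$ is odd, I can adjust $c$ by a multiple of $N$ so that $c^2+1\equiv0\pmod{2N}$. Concretely, $c^2+1$ is even whenever $c$ is odd, and replacing $c$ by $c+N$ changes parity, so I can take $c$ odd; then $c^2+1$ is divisible by $N$ and by $2$, hence by $2N$. The map $\bm v+L\mapsto c\bm v+L$ is an automorphism of $H$, since $\gcd(c,p)=1$ and $H$ is a $p$-group. It satisfies
\[
\langle c\bm v,c\bm v\rangle=c^2\langle\bm v,\bm v\rangle=-\langle\bm v,\bm v\rangle+(c^2+1)\langle\bm v,\bm v\rangle.
\]
Here $(c^2+1)\langle\bm v,\bm v\rangle\in\frac{2N}{N}\Z=2\Z$. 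Hence $\exp(\pi\ii\langle c\bm v,c\bm v\rangle)=\overline{\exp(\pi\ii\langle\bm v,\bm v\rangle)}$. Reindexing the sum \eqref{eq:gauss} by this bijection gives $G_L(H)=\overline{G_L(H)}$, so $G_L(H)$ is real.

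The only delicate point is ensuring that $c^2\langle\bm v,\bm v\rangle\equiv-\langle\bm v,\bm v\rangle$ holds modulo $2$ and not just modulo $1$; choosing $c$ odd handles this.
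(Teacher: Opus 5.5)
Your proof is correct and follows the same route as the paper: take a square root $c$ of $-1$ modulo a suitable modulus, observe that multiplication by $c$ permutes the $p$-group $H$ and conjugates each summand $\exp(\pi\ii\langle\bm v,\bm v\rangle)$, and reindex to get $G_L(H)=\overline{G_L(H)}$. The differences are cosmetic. The paper works modulo $p^{2a}$, using that $\langle p^a\bm v,p^a\bm v\rangle$ is even, and builds the root via Wilson's theorem and Hensel lifting; you instead work modulo $2p^k$ with an odd root obtained from the cyclicity of $(\Z/p^k\Z)^\times$, and use $\langle p^k\bm v,\bm v\rangle\in\Z$ (for $k=0$ the cyclicity remark does not apply, but then $H$ is trivial).
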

\begin{proof}
Choose $a\geq1$ such that $p^a\bm v\in L$ for every $\bm v+L\in H$. Because $L$ is even, $p^{2a}\langle\bm v,\bm v\rangle/2\in\Z$. We claim that there is an integer $u$ such that
\begin{equation}\label{eq:sqrt-minus-one}
u^2\equiv-1\pmod{p^{2a}}
\quad\text{and}\quad
p\nmid u.
\end{equation}
Consider the map $f: H\to H$ defined by $f(\bm v+L)=u\cdot (\bm v+L)$. Since $p\nmid u$, there is an integer $b$ with $bu\equiv1\pmod{p^a}$. Write $bu=1+rp^a$ for an integer $r$. For every $\bm v+L\in H$ we have $p^a(\bm v+L)=L$, so
\[
b\bigl(u(\bm v+L)\bigr)
=u\bigl(b(\bm v+L)\bigr)
=(1+rp^a)(\bm v+L)=\bm v+L.
\]
Then $u(\bm v+L)=u(\bm w+L)$ implies
\[
\bm v+L=b\bigl(u(\bm v+L)\bigr)
=b\bigl(u(\bm w+L)\bigr)=\bm w+L,
\]
so distinct elements have distinct images. Also, every $\bm v+L\in H$ is the image of the element $b(\bm v+L)\in H$, since $u\bigl(b(\bm v+L)\bigr)=\bm v+L$. Hence, the map $f$ is a bijection, and multiplication by $u$ permutes $H$. Write $u^2+1=kp^{2a}$, where $k\in\Z$. Then we have
\[
\exp(\pi\ii u^2\langle\bm v,\bm v\rangle)
=\exp(\pi\ii kp^{2a}\langle\bm v,\bm v\rangle)
\cdot \exp(-\pi\ii\langle\bm v,\bm v\rangle)=\exp(-\pi\ii\langle\bm v,\bm v\rangle),
\]
where the last equation follows from $p^{2a}\langle\bm v,\bm v\rangle/2\in\Z$. Recall that $f$ is a bijection on $H$, so we have
\[
G_L(H)
=\frac1{\sqrt{|H|}}\sum_{\bm v+L\in H}
\exp\bigl(\pi\ii u^2\langle\bm v,\bm v\rangle\bigr)
=\frac1{\sqrt{|H|}}\sum_{\bm v+L\in H}
\exp\bigl(-\pi\ii\langle\bm v,\bm v\rangle\bigr).
\]
Hence, $G_L(H)$ equals its complex conjugate, implying that $G_L(H)\in\R$.

It remains to prove \eqref{eq:sqrt-minus-one}. We give a direct construction of such an integer $u$. Note that
\begin{equation*}
\begin{aligned}
(p-1)! &=\prod_{j=1}^{(p-1)/2}j(p-j)\equiv\prod_{j=1}^{(p-1)/2}j(-j)
\equiv(-1)^{(p-1)/2}   \left(\left(\frac{p-1}{2}\right)!\right)^2 \pmod p. \end{aligned}
\end{equation*}
By Wilson's theorem \cite[Chapter~8]{DudleyWilson}, $(p-1)!\equiv-1\pmod p$. Because $p\equiv1\pmod4$, the exponent $(p-1)/2$ is even. Hence, $u_1=\left(\frac{p-1}{2}\right)!$ satisfies $u_1^2\equiv (p-1)!\equiv -1\pmod p$ and $p\nmid u_1$. Suppose inductively that $u_k^2+1=c_kp^k$ for some $k\geq1$ with $p\nmid u_k$. Since $2u_k$ is invertible modulo $p$, choose an integer $r_k$ satisfying $c_k+2u_kr_k\equiv0\pmod p$ and set $u_{k+1}=u_k+r_kp^k$. Then
\[
u_{k+1}^2+1
=p^k(c_k+2u_kr_k)+r_k^2p^{2k}\equiv 0\pmod {p^{k+1}}.
\]
Moreover $u_{k+1}\equiv u_k\pmod p$, so $p\nmid u_{k+1}$. After $2a-1$ such steps, $u=u_{2a}$ satisfies \eqref{eq:sqrt-minus-one}.

\end{proof}

\section{The lattices generated by tight spherical $5$-designs}\label{sec:lattice}

In this section we study the lattice generated by a tight spherical $5$-design. We retain the notation of Bannai, Munemasa, and Venkov \cite[Sections~2 and~3.1]{BMV}. Throughout this paper, we denote
\begin{equation}\label{eq:parameters}
m\in\Z_{>0},\qquad d=2m+1,\qquad
n=d^2-2=4m(m+1)-1.
\end{equation}
Write
\[
\mathbb{S}^{n-1} (d)=\{\bm{x}\in\R^n:\langle\bm{x},\bm{x}\rangle=d\}.
\]
A finite subset $A$ on $\mathbb{S}^{n-1} (d)$ is called a spherical $t$-design if $\frac{1}{\sqrt{d}}A$ is a spherical $t$-design in the unit sphere. A tight spherical $5$-design on $\mathbb{S}^{n-1} (d)$ can be scaled and written as $D=X\cup(-X)\subset \mathbb{S}^{n-1} (d)$, where $X\cap -X=\varnothing$, and
\begin{equation}\label{eq:scaled}
|X|=\frac{n(n+1)}2,\qquad
\langle\bm{x},\bm{x}\rangle=d,\qquad
\langle\bm{x},\bm{y}\rangle\in\{1,-1\}
\quad\forall\bm{x},\bm{y}\in X, \bm x\ne\bm y.
\end{equation}
For every $\bm{\alpha}\in\R^n$, we have \cite{NV,BMV}
\begin{align}
\sum_{\bm{x}\in X}\langle\bm{x},\bm{\alpha}\rangle^2
&=2m(m+1)d\langle\bm{\alpha},\bm{\alpha}\rangle,
\label{eq:S2}\\
\sum_{\bm{x}\in X}\langle\bm{x},\bm{\alpha}\rangle^4
&=6m(m+1)\langle\bm{\alpha},\bm{\alpha}\rangle^2.
\label{eq:S4}
\end{align}

Throughout this section, suppose that a tight spherical $5$-design on $\mathbb{S}^{n-1} (d)$ exists with the parameters in \eqref{eq:parameters}. Following \cite[equation (20)-(24)]{BMV}, we define
\begin{equation}\label{eq:lattices}
\begin{split}
\Lambda&=\left\{\sum_{\bm{x}\in X}c_{\bm{x}}\bm{x}:
c_{\bm{x}}\in\Z\right\},\\
\Lambda_+&=\left\{\sum_{\bm{x}\in X}c_{\bm{x}}\bm{x}:
c_{\bm{x}}\in\Z, \sum_{\bm{x}\in X}c_{\bm{x}}\equiv 0\pmod2 \right\},\\
\Gamma&=\frac1{\sqrt2}\Lambda_+.
\end{split}
\end{equation}
Note that for any $\bm{\alpha}\in\R^n$, we have $\bm{\alpha}\in\Lambda^*$ if and only if $\langle\bm{\alpha},\bm{x}\rangle\in\Z$ for every $\bm{x}\in X$.

The following lemma strengthens the scalar integrality condition \cite[Lemma 3.3]{BMV}.

\begin{lemma}\label{lem:J}
For every $\bm{\alpha}\in\Lambda^*$, we have
\begin{equation}\label{eq:J}
\frac{m(m+1)\langle\bm{\alpha},\bm{\alpha}\rangle(3\langle\bm{\alpha},\bm{\alpha}\rangle-d)}{12}\in\Z.
\end{equation}
\end{lemma}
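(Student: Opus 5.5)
We will combine the two moment identities \eqref{eq:S2} and \eqref{eq:S4} with the integrality of $\langle\bm x,\bm\alpha\rangle$. Fix $\bm\alpha\in\Lambda^*$ and write $a_{\bm x}=\langle\bm x,\bm\alpha\rangle\in\Z$ for $\bm x\in X$, and $N=\langle\bm\alpha,\bm\alpha\rangle$. The elementary fact to use is that $a^4-a^2=(a-1)a\cdot a(a+1)$ is divisible by $12$ for every integer $a$. Indeed, $a^2(a^2-1)$ is divisible by $3$, since $3\mid a$ or $3\mid a^2-1$. It is also divisible by $4$: if $a$ is even then $4\mid a^2$, and if $a$ is odd then $8\mid a^2-1$.

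Summing over $X$ gives
\[
\sum_{\bm x\in X}\frac{a_{\bm x}^4-a_{\bm x}^2}{12}\in\Z .
\]
Substituting the two identities turns the left side into
\[
\frac{6m(m+1)N^2-2m(m+1)dN}{12}
=\frac{2m(m+1)N(3N-d)}{12}.
\]
This yields $m(m+1)N(3N-d)/6\in\Z$, which is weaker than \eqref{eq:J} by a factor of $2$.

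To recover the missing factor of $2$, I would sharpen the divisibility to $24\mid a^4-a^2$, which holds for odd $a$ since then $8\mid a^2-1$ and $3\mid a^2-1$. Writing $a^4-a^2=a^2(a^2-1)$, it is divisible by $24$ when $a$ is odd. When $a$ is even it is divisible by $4\cdot 3$, but not necessarily by $8$: at $a=2$ the value is $12$. So a refinement is needed. I would use
\[
\frac{a^4-a^2}{24}\equiv \frac{\#\{\bm x: a_{\bm x}\equiv 2\pmod 4\}}{2}\pmod 1 ,
\]
since for even $a=2b$ we have $a^4-a^2=4b^2(4b^2-1)$, and this is divisible by $8$ iff $b$ is even. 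Hence
\[
\frac{m(m+1)N(3N-d)}{12}\equiv \tfrac12\,\#\{\bm x: a_{\bm x}\equiv2\pmod4\}\pmod 1.
\]
It then suffices to show that this count is even. I would try to control it via a second congruence, for instance by reducing \eqref{eq:S2} modulo suitable powers of $2$ and splitting $X$ according to $a_{\bm x}\bmod 4$: odd values contribute $1\bmod 8$ to $\sum a^2$, values $\equiv2\pmod4$ contribute $4\bmod 16$, and values $\equiv0\pmod4$ contribute $0\bmod16$.

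The main obstacle is exactly this parity step, since the right side $2m(m+1)dN$ involves the unknown rational $N$. If the direct count fails, the alternative is to apply the weaker identity to $2\bm\alpha$ and to $\bm\alpha+\bm y$ for $\bm y\in X$. Those vectors also lie in $\Lambda^*$ because $\Lambda\subset\Lambda^*$, which holds since the inner products in \eqref{eq:scaled} are integers. Comparing the resulting integrality statements should eliminate the factor $2$.
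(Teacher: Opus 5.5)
There is a genuine gap: the proposal never supplies the missing factor of $2$. Your reduction itself is correct. The bound $12\mid a^4-a^2$ gives only $\frac{m(m+1)N(3N-d)}{6}\in\Z$. Your refinement correctly shows that the quantity in \eqref{eq:J} is congruent modulo $1$ to $\frac12 T$, where $T=\#\{\bm x\in X: a_{\bm x}\equiv 2\pmod 4\}$. But you never prove that $T$ is even, and neither of your suggested routes can do it.

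Consider the case $\nu_2(m(m+1))=2h+1$, $\nu_2(N)=-h$, $h\geq1$. This is precisely the situation in which your weaker statement does not already imply \eqref{eq:J}. In this case \eqref{eq:S4} gives $\nu_2\bigl(\sum a_{\bm x}^4\bigr)=2$. Since odd fourth powers are $1$ and even ones $0$ modulo $16$, the number $O$ of odd $a_{\bm x}$ satisfies $O\equiv4\pmod 8$. Meanwhile \eqref{eq:S2} gives $\sum a_{\bm x}^2\equiv0\pmod 8$, and $\sum a_{\bm x}^2\equiv O+4T\pmod 8$, which forces $T$ odd. So counting modulo powers of $2$ with \eqref{eq:S2} and \eqref{eq:S4} does not exclude the bad case; it reproduces it. Likewise, every $\bm\gamma\in\Z\bm\alpha+\Lambda$, in particular $2\bm\alpha$ and $\bm\alpha+\bm y$, has $\frac{m(m+1)\langle\bm\gamma,\bm\gamma\rangle(3\langle\bm\gamma,\bm\gamma\rangle-d)}{6}$ $2$-integral in this case. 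So comparing your weaker identities for such vectors cannot yield a contradiction.

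The missing idea is to couple $\bm\alpha$ with a fixed $\bm x_0\in X$ through the odd numbers $\langle\bm x,\bm x_0\rangle$, beyond integer shifts. The paper does this with the half-shift $\bm\beta=\bm\alpha+\frac12\bm x_0$. Then all $\langle\bm x,\bm\beta\rangle$ lie in $\frac12+\Z$, where $\frac1{24}(y^2-\frac14)(y^2-\frac94)=\binom{k+2}{4}$ is an integer. This polynomial has leading coefficient $\frac1{24}$ rather than $\frac1{12}$. Expanding gives \eqref{eq:J} plus $I_1,I_2,I_3$. The cross term $I_1=\frac{m(m+1)tu}{2}$ needs the separate parity claim $m(m+1)t\in2\Z$, which the paper proves from \eqref{eq:S4} and from \eqref{eq:S2} modulo $2$.

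The same repair fits your framework. Since $\frac{a^4-a^2}{24}+\frac{a^3-a}{12}=\binom{a+2}{4}$ and $\frac{a^3-a}{6}\in\Z$, you have $\frac{a^4-a^2}{24}\equiv\epsilon\,\frac{a^3-a}{12}\pmod 1$ for every odd $\epsilon$. Take $\epsilon=\langle\bm x,\bm x_0\rangle$ and $u=\langle\bm\alpha,\bm x_0\rangle$. Use the polarized identities $\sum_{\bm x}a_{\bm x}^3\langle\bm x,\bm x_0\rangle=6m(m+1)Nu$ and $\sum_{\bm x}a_{\bm x}\langle\bm x,\bm x_0\rangle=2m(m+1)du$. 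This shows the quantity in \eqref{eq:J} is $\equiv\frac{m(m+1)Nu}{2}\pmod 1$.

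You would then still need $m(m+1)Nu$ even. One way is the paper's claim. Alternatively, choose $\bm x_0$ with $u$ even, which is possible unless all $a_{\bm x}$ are odd, in which case $T=0$. That alternative needs only $m(m+1)N\in\Z$, which follows from \eqref{eq:S4}.
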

\begin{proof}

There is nothing to prove when $\bm{\alpha}=\bm{0}$. Assume $\bm{\alpha}\neq \bm{0}$. Choose $\bm{x}_0\in X$ and set $\bm{\beta}=\bm{\alpha}+\frac{1}{2}\bm{x}_0$. Denote
\begin{equation*}
u=\langle\bm{x}_0,\bm{\alpha}\rangle\in\Z,
\quad
t=\langle\bm{\alpha},\bm{\alpha}\rangle
\quad
\text{and}\quad
I:=\frac{1}{24}	\sum_{\bm{x}\in X} \left(\langle\bm{x},\bm{\beta}\rangle^2-\frac{1}{4}\right)\cdot \left(\langle\bm{x},\bm{\beta}\rangle^2-\frac{9}{4}\right).
\end{equation*}
Note that $\langle\bm{x},\bm{\beta}\rangle$ belongs to $\frac12+\Z$ for each $\bm x\in X$, because $\langle\bm{x},\bm{x}_0\rangle$ is odd. Since four consecutive integers have product divisible by $24$, we see that for every $k\in\Z$,
\[
\frac{1}{24}\cdot \left((k+\frac{1}{2})^2-\frac{1}{4}\right)\cdot
\left((k+\frac{1}{2})^2-\frac{9}{4}\right)
=\frac{1}{24} (k-1)k(k+1)(k+2)\in\Z,
\]
Therefore, we have $I\in\Z$. On the other hand, the moment identities \eqref{eq:S2} and \eqref{eq:S4} give
\begin{equation*}
\begin{aligned}
I&=\frac{1}{24}	\sum_{\bm{x}\in X} \langle\bm{x},\bm{\beta}\rangle^4
-\frac{5}{48} \sum_{\bm{x}\in X} \langle\bm{x},\bm{\beta}\rangle^2+\frac{1}{24}\cdot \frac{9}{16}\cdot |X|\\
&=\frac{1}{24}\cdot 6m(m+1)\langle\bm{\beta},\bm{\beta}\rangle^2-\frac{5}{48}\cdot 2m(m+1)d \langle\bm{\beta},\bm{\beta}\rangle+\frac{1}{24}\cdot \frac{9}{16}\cdot |X|\\
&\overset{(a)}=\frac{m(m+1)t(3t-d)}{12}
+I_1+I_2+I_3,
\end{aligned}
\end{equation*}
where
\begin{equation*}
I_1:=\frac{m(m+1)tu}{2},
\quad I_2:=\frac{m(m+1)u(3u-d)}{12},
\quad I_3:=\frac{(m-1)m(m+1)(m+2)}{24}.
\end{equation*}
Here, equation ($a$) follows from the substitution $\langle\bm{\beta},\bm{\beta}\rangle=t+u+\frac{d}{4}$ and $|X|=\frac12 n(n+1)=2m(m+1)(d^2-2)$. We claim that
\begin{equation}\label{eq:J2}
m(m+1)t\in2\Z.
\end{equation}
Then we have $I_1\in \Z$. Note that both $m(m+1)$ and $u(3u-d)$ are even, so $m(m+1)u(3u-d)$ is divisible by $4$. Note that $m(m+1)u(3u-d)$ is also divisible by $3$ because we have either $3\mid m(m+1)$, or $m\equiv1\pmod3$ and $3u-d=3u-2m-1 \equiv0\pmod3$. Hence, $I_2$ is also an integer. Since $(m-1)m(m+1)(m+2)$ is divisible by $24$, $I_3$ is an integer. Since $I, I_1,I_2,I_3\in \Z$, we conclude that $\frac{1}{12}m(m+1)t(3t-d)$ is an integer. We arrive at \eqref{eq:J}.

It remains to prove \eqref{eq:J2}. Write $t=p/q$ with $p,q\in\Z$, $q>0$, and $\gcd(p,q)=1$. The fourth-moment identity \eqref{eq:S4} gives $q^2\mid6m(m+1)$. For each prime $r\geq5$ this implies $2\nu_r(q)\leq\nu_r(m(m+1))$, and for $r=2,3$ it implies $2\nu_r(q)\leq1+\nu_r(m(m+1))$. Hence, for each prime $r\geq 2$, we have $\nu_r(m(m+1))\geq \nu_r(q)$, implying that $q\mid m(m+1)$. Therefore, we have $m(m+1)t\in\Z$. If $m(m+1)t$ is an odd integer, then the integrality of
\[
\sum_{\bm{x}\in X}\langle\bm{x},\bm{\alpha}\rangle^4
=\frac{6\bigl(m(m+1)t\bigr)^2}{m(m+1)}
\]
forces $\nu_2(m(m+1))=1$, so the sum $\sum_{\bm{x}\in X}\langle\bm{x},\bm{\alpha}\rangle^4$ becomes an odd integer. Note that $z^4\equiv z^2\pmod2$ for every integer $z$. Hence, the second-moment sum $\sum_{\bm{x}\in X}\langle\bm{x},\bm{\alpha}\rangle^2$ is also an odd integer. However, \eqref{eq:S2} makes the second-moment sum $\sum_{\bm{x}\in X}\langle\bm{x},\bm{\alpha}\rangle^2$ an even integer. We arrive at a contradiction. Hence, $m(m+1)t$ is even. This completes the proof.

\end{proof}

In the following lemma, we present several basic properties of the lattices $\Lambda$ and $\Gamma$. In particular, Lemma \ref{lem:even} (ii) shows that $\Gamma$ is an even lattice for any $m>0$, which removes the extra hypothesis $8\nmid m(m+1)$ from the evenness assertion in \cite[Lemma 3.7]{BMV}.

\begin{lemma}\label{lem:even}
The following holds.
\begin{enumerate}

	\item[(i)]
For each $\bm x_0\in X$, we have $|\Lambda/\Lambda_+|=2$ and
\begin{equation}\label{xueq35}
(\Lambda_+)^*=\Lambda^*\sqcup\left(\frac{\bm x_0}{2}+\Lambda^*\right).
\end{equation}

\item[(ii)] The lattice $\Gamma=\frac{1}{\sqrt{2}}\Lambda_+$ is even for every $m>0$.
Moreover, we have
\begin{equation}\label{xueq11}
\Gamma\subset \sqrt{2}\Lambda^*\subset \Gamma^*.
\end{equation}

\item[(iii)] The quotient $\Gamma^*/\Gamma$ has the disjoint decomposition
\[
\Gamma^*/\Gamma
=\sqrt{2}\Lambda^*/\Gamma\sqcup
\left\{\frac{\bm{x}_0}{\sqrt2}+\sqrt2\bm\alpha+\Gamma:
\bm\alpha\in\Lambda^*\right\}.
\]
Consequently, we have $|\Gamma^*/\Gamma|=2\cdot |\sqrt{2}\Lambda^*/\Gamma|$ and $\det\Lambda=2^{n-1}\cdot |\sqrt{2}\Lambda^*/\Gamma|$.

\item[(iv)] We have
\begin{equation}\label{xueq12}
\begin{aligned}
G_\Gamma(\Gamma^*/\Gamma)
&=G_\Gamma(\sqrt{2}\Lambda^*/\Gamma)\,
\frac{1+\exp(\pi\ii d/2)}{\sqrt2}.
\end{aligned}
\end{equation}
\end{enumerate}

\end{lemma}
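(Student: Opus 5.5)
We prove the four parts in order; each builds on the previous one.

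For (i), the map $\Lambda\to\Z/2\Z$, $\sum c_{\bm x}\bm x\mapsto\sum c_{\bm x}\bmod 2$, must first be shown to be well defined. If $\sum c_{\bm x}\bm x=\bm 0$, we pair with $\bm x_0$ and use $\langle\bm x,\bm x_0\rangle\equiv1\pmod2$ for $\bm x\ne\bm x_0$ and $\langle\bm x_0,\bm x_0\rangle=d$ odd. This gives $0=\sum c_{\bm x}\langle\bm x,\bm x_0\rangle\equiv\sum c_{\bm x}\pmod 2$. The map is onto with kernel $\Lambda_+$, so $|\Lambda/\Lambda_+|=2$ and $\Lambda=\Lambda_+\sqcup(\bm x_0+\Lambda_+)$. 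Dually, $|\Lambda_+^*/\Lambda^*|=2$. The vector $\bm x_0/2$ lies in $\Lambda_+^*$: for $\bm v=\sum c_{\bm x}\bm x\in\Lambda_+$, the same parity computation gives $\langle\bm x_0,\bm v\rangle\equiv\sum c_{\bm x}\equiv0\pmod 2$. On the other hand, $\bm x_0/2\notin\Lambda^*$ because $\langle\bm x_0/2,\bm x_0\rangle=d/2\notin\Z$. This yields \eqref{xueq35}.

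For (ii), every element of $\Lambda_+$ is a sum of an even number of vectors $\pm\bm x$. Its norm is $\sum_{\bm x}c_{\bm x}^2d+\sum_{\bm x\ne\bm y}c_{\bm x}c_{\bm y}\langle\bm x,\bm y\rangle$, which is congruent modulo $2$ to $\sum c_{\bm x}^2+\sum_{\bm x\neq\bm y}c_{\bm x}c_{\bm y}=(\sum c_{\bm x})^2$. It is therefore congruent to $0$ modulo $4$, and we must upgrade this. Writing $N=\sum_{\bm x}c_{\bm x}^2 d+2\sum_{\bm x<\bm y}c_{\bm x}c_{\bm y}\varepsilon_{\bm x\bm y}$ with $\varepsilon=\pm1$, we use $\varepsilon\equiv1\pmod 2$, hence $2\varepsilon\equiv 2\pmod 4$, and $d\equiv1\pmod 2$. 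This gives $N\equiv\sum c_{\bm x}^2 d+2\sum_{\bm x<\bm y}c_{\bm x}c_{\bm y}\pmod 4$. Then $N\equiv(\sum c_{\bm x})^2+(d-1)\sum c_{\bm x}^2\pmod 4$. Since $d-1=2m$ with $m$ even in the paper's setting, or more generally since $(d-1)\sum c_{\bm x}^2\equiv (d-1)\sum c_{\bm x}\equiv 0\pmod 4$ when $\sum c_{\bm x}$ is even and $d-1$ is even, we get $N\in4\Z$. Hence the norms in $\Gamma$ are even, and integrality follows by polarization. For \eqref{xueq11}, $\Lambda_+\subset\Lambda\subset 2\Lambda^*$ would need checking. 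The cleaner route is $\Gamma\subset\sqrt2\Lambda^*$: for $\bm v\in\Lambda_+$ we have $\bm v/\sqrt2=\sqrt2(\bm v/2)$, and $\bm v/2\in\Lambda^*$ because $\langle\bm v,\bm x\rangle\equiv\sum c_{\bm y}\equiv 0\pmod 2$ by the parity computation. Also $\sqrt2\Lambda^*\subset\Gamma^*$, since $\langle\sqrt2\bm\alpha,\bm v/\sqrt2\rangle=\langle\bm\alpha,\bm v\rangle\in\Z$.

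For (iii), we scale (i): $\Gamma^*=\sqrt2\Lambda_+^*=\sqrt2\Lambda^*\sqcup(\bm x_0/\sqrt2+\sqrt2\Lambda^*)$, and reducing modulo $\Gamma$ gives the decomposition. The counting statements follow from \eqref{eq:lattice-determinant-rules}. We have $\det\Gamma=2^{-n}\det\Lambda_+=2^{-n}\cdot4\det\Lambda$ and $\det\Gamma=|\Gamma^*/\Gamma|=2|\sqrt2\Lambda^*/\Gamma|$, which combine to give $\det\Lambda=2^{n-1}|\sqrt2\Lambda^*/\Gamma|$.

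For (iv), we split the Gauss sum over the two cosets in (iii), and this is the step requiring care. For $\bm w=\bm x_0/\sqrt2+\sqrt2\bm\alpha$ we have $\langle\bm w,\bm w\rangle=d/2+2\langle\bm x_0,\bm\alpha\rangle+2\langle\bm\alpha,\bm\alpha\rangle$. Since $\langle\bm x_0,\bm\alpha\rangle\in\Z$, the phase is $\exp(\pi\ii d/2)\exp(\pi\ii\langle\sqrt2\bm\alpha,\sqrt2\bm\alpha\rangle)$. The second coset therefore contributes $\exp(\pi\ii d/2)$ times the first sum, and normalizing by $\sqrt{|\Gamma^*/\Gamma|}=\sqrt2\sqrt{|\sqrt2\Lambda^*/\Gamma|}$ gives \eqref{xueq12}. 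The main obstacle is the mod-$4$ evenness argument in (ii) for general $m$. If the parity bookkeeping fails there, I would instead apply Lemma~\ref{lem:J}-type moment arguments to $\bm v/2\in\Lambda^*$.
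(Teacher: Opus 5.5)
Your proposal is correct and follows the paper's proof essentially step for step. It uses the same parity argument by pairing with $\bm x_0$, the same mod-$4$ computation $\langle\bm\lambda,\bm\lambda\rangle\equiv(d-1)\sum c_{\bm x}^2+(\sum c_{\bm x})^2$, the same scaling and determinant count in (iii), and the same coset splitting of the Gauss sum in (iv); that mod-$4$ computation already works for every $m$, so the fallback to moment arguments is unnecessary. The only real deviation is in (i), where you get the decomposition of $(\Lambda_+)^*$ from the index duality $[(\Lambda_+)^*:\Lambda^*]=[\Lambda:\Lambda_+]=2$ rather than the paper's direct case split on whether $\langle\bm\beta,\bm x_0\rangle\in\Z$. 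You should also drop the aside about $\Lambda\subset2\Lambda^*$: that inclusion is false because $\langle\bm x_0,\bm x_0\rangle=d$ is odd, and you never use it.
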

\begin{proof}
(i) Fix $\bm x_0\in X$. For any expression $\bm\lambda=\sum_{\bm x\in X}c_{\bm x}\bm x\in\Lambda$, all the numbers $\langle\bm x,\bm x_0\rangle$ are odd by \eqref{eq:scaled}. Hence
\[
\langle\bm\lambda,\bm x_0\rangle
\equiv\sum_{\bm x\in X}c_{\bm x}\pmod2.
\]
The left side depends only on $\bm\lambda$, so the parity of the coefficient sum is independent of the expression. By the definition of $\Lambda_+$, we obtain
\begin{equation}\label{xueq30}
\Lambda_+=\{\bm\lambda\in\Lambda:
\langle\bm\lambda,\bm x_0\rangle\in2\Z\}.
\end{equation}
Moreover $\langle\bm x_0,\bm x_0\rangle=d$ is odd, so $\bm x_0\notin\Lambda_+$ and $\Lambda=\Lambda_+\sqcup(\bm x_0+\Lambda_+)$. This proves $|\Lambda/\Lambda_+|=2$.

It remains to prove \eqref{xueq35}. The displayed description of $\Lambda_+$ gives $\bm x_0/2\in(\Lambda_+)^*$, whereas $\bm x_0/2\notin\Lambda^*$ because $d/2\notin\Z$. To see that these are all the classes in $(\Lambda_+)^*$, let $\bm\beta\in(\Lambda_+)^*$. Since $2\bm x_0\in\Lambda_+$, $\langle\bm\beta,\bm x_0\rangle\in\frac12\Z$. If this is an integer, then $\bm\beta$ pairs integrally with both $\Lambda_+$ and $\bm x_0$. Combining with $\Lambda=\Lambda_+\sqcup(\bm x_0+\Lambda_+)$, we see that $\bm\beta\in\Lambda^*$. Otherwise, $\langle\bm\beta-\bm x_0/2,\bm x_0\rangle\in\Z$ because $d$ is odd. Since $\bm\beta-\bm x_0/2\in(\Lambda_+)^*$,  the same argument makes $\bm\beta-\bm x_0/2$ in $\Lambda^*$, i.e., $\bm\beta\in \bm x_0/2+\Lambda^*$. Since $\bm x_0/2+\Lambda^*$ and $\Lambda^*$ are disjoint, we arrive at \eqref{xueq35}.

(ii) Let $\bm\lambda=\sum_{i=1}^Nc_i\bm x_i\in\Lambda_+$, where $X=\{\bm x_1,\ldots,\bm x_N\}$ and $S=\sum_{i=1}^N c_i$ is even. By \eqref{eq:scaled},
\[
\langle\bm\lambda,\bm\lambda\rangle
\equiv d\sum_{i=1}^N c_i^2
+2\sum_{1\leq i<j\leq N}c_ic_j
= (d-1)\sum_{i=1}^N c_i^2+S^2\pmod4.
\]
Here $\sum_{i=1}^N c_i^2\equiv S\equiv0\pmod2$ and $d-1=2m$. Both terms on the right are divisible by $4$. Hence, $\langle\bm\lambda/\sqrt2, \bm\lambda/\sqrt2\rangle\in2\Z$, so $\Gamma$ is even. For every $\bm x\in X$, the parity calculation from (i), with $\bm x$ in place of $\bm x_0$, gives $\langle\bm\lambda,\bm x\rangle\in2\Z$. As $X$ generates $\Lambda$, this implies $\bm\lambda/2\in\Lambda^*$ and hence $\Gamma\subseteq\sqrt2\Lambda^*$. Finally, for $\bm\alpha\in\Lambda^*$ and $\bm\lambda\in\Lambda_+$, $\langle\sqrt2\bm\alpha,\bm\lambda/\sqrt2\rangle =\langle\bm\alpha,\bm\lambda\rangle\in\Z$. Therefore $\sqrt2\Lambda^*\subseteq\Gamma^*$.

(iii) The definition of the dual lattice and $\Gamma=\Lambda_+/\sqrt2$ give $\Gamma^*=\sqrt2(\Lambda_+)^*$. Multiply the disjoint decomposition in (i) by $\sqrt2$ and take classes modulo $\Gamma\subseteq\sqrt2\Lambda^*$ from (ii). This gives the stated disjoint decomposition of $\Gamma^*/\Gamma$. Its two parts have the same cardinality, so $|\Gamma^*/\Gamma|=2|\sqrt2\Lambda^*/\Gamma|$. Since $|\Lambda/\Lambda_+|=2$ and $\Gamma$ has rank $n$, the index and scaling formulas give
\[
\det\Gamma=2^{-n}\det\Lambda_+
=2^{2-n}\det\Lambda.
\]
The lattice $\Gamma$ is integral by (ii), whence $\det\Gamma=|\Gamma^*/\Gamma|$. Combining these equalities proves $\det\Lambda=2^{n-1}|\sqrt2\Lambda^*/\Gamma|$.

(iv) Choose one $\bm\alpha\in\Lambda^*$ for each class $\sqrt2\bm\alpha+\Gamma\in\sqrt2\Lambda^*/\Gamma$. By (iii), addition of $\bm x_0/\sqrt2+\Gamma$ maps these classes bijectively onto the other half of $\Gamma^*/\Gamma$. The class paired with $\sqrt2\bm\alpha+\Gamma$ is represented by $\bm x_0/\sqrt2+\sqrt2\bm\alpha$. Its squared norm is
\[
\frac d2+2\langle\bm\alpha,\bm\alpha\rangle
+2\langle\bm x_0,\bm\alpha\rangle.
\]
The last inner product is an integer because $\bm\alpha\in\Lambda^*$ and $\bm x_0\in\Lambda$. The exponential for the first class is $\exp(2\pi\ii\langle\bm\alpha,\bm\alpha\rangle)$. For the second class, the extra factor $\exp(2\pi\ii\langle\bm x_0,\bm\alpha\rangle)$ is $1$, so its exponential is the first one multiplied by $\exp(\pi\ii d/2)$. Since $\Gamma$ is even by (ii), these exponentials do not depend on the chosen representatives. Part~(iii) gives $|\Gamma^*/\Gamma|=2|\sqrt2\Lambda^*/\Gamma|$. Hence, the definition \eqref{eq:gauss} yields
\[
\begin{aligned}
G_\Gamma(\Gamma^*/\Gamma)
&=\frac{1}{\sqrt{2|\sqrt2\Lambda^*/\Gamma|}}
\sum_{\sqrt2\bm\alpha+\Gamma\in\sqrt2\Lambda^*/\Gamma}
\exp\bigl(2\pi\ii\langle\bm\alpha,\bm\alpha\rangle\bigr)
\bigl(1+\exp(\pi\ii d/2)\bigr)\\
&=\frac{1+\exp(\pi\ii d/2)}{\sqrt2}\,
G_\Gamma(\sqrt2\Lambda^*/\Gamma),
\end{aligned}
\]
which is \eqref{xueq12}.

\end{proof}

\begin{lemma}\label{lem:phase}
If $m$ is even, then
\begin{equation}\label{eq:required-phase}
G_\Gamma(\sqrt{2}\Lambda^*/\Gamma)=-\ii.
\end{equation}
\end{lemma}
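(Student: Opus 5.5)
Apply the Milgram--Braun formula \eqref{eq:milgram} to the even lattice $\Gamma$ of rank $n$, and then divide by the factor in \eqref{xueq12}. By Lemma~\ref{lem:even}(ii), $\Gamma$ is even, so \eqref{eq:milgram} gives
\[
G_\Gamma(\Gamma^*/\Gamma)=\exp(2\pi\ii n/8).
\]
Lemma~\ref{lem:even}(iv) gives
\[
G_\Gamma(\Gamma^*/\Gamma)=G_\Gamma(\sqrt2\Lambda^*/\Gamma)\cdot\frac{1+\exp(\pi\ii d/2)}{\sqrt2}.
\]
It remains to evaluate both sides explicitly when $m$ is even.

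Write $m=2k$. Then $n=4m(m+1)-1=8k(2k+1)-1\equiv -1\pmod 8$, so $\exp(2\pi\ii n/8)=\exp(-\pi\ii/4)=(1-\ii)/\sqrt2$. Also $d=2m+1=4k+1\equiv1\pmod4$, so $\exp(\pi\ii d/2)=\exp(\pi\ii/2)=\ii$. The factor in \eqref{xueq12} is therefore $(1+\ii)/\sqrt2=\exp(\pi\ii/4)$, which is nonzero. Dividing gives
\[
G_\Gamma(\sqrt2\Lambda^*/\Gamma)=\frac{\exp(-\pi\ii/4)}{\exp(\pi\ii/4)}=\exp(-\pi\ii/2)=-\ii,
\]
which is \eqref{eq:required-phase}.

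There is no real obstacle here. The only points to check are that $\Gamma$ is even for every $m$, so that the Gauss sums are well defined and Milgram applies (this is exactly Lemma~\ref{lem:even}(ii)), and that the residues $n\bmod 8$ and $d\bmod 4$ are computed correctly. Both residues depend only on $m$ being even.
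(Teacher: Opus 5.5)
Your proposal is correct and matches the paper's proof: you apply Milgram's formula to the even lattice $\Gamma$ with $n\equiv 7\pmod 8$, use Lemma~\ref{lem:even}(iv) with $\exp(\pi i d/2)=i$ because $d\equiv1\pmod4$, and divide to get $(1-i)/(1+i)=-i$. The only small difference is that you derive $n\equiv7\pmod8$ from $m$ being even, whereas the paper observes that it already holds because $m(m+1)$ is always even.
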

\begin{proof}
By Lemma \ref{lem:even} (ii), we see that $\Gamma$ is even. Since $m(m+1)$ is even, $n=4m(m+1)-1\equiv7\pmod8$, so Milgram's formula \eqref{eq:milgram} gives $G_\Gamma(\Gamma^*/\Gamma)=(1-\ii)/\sqrt2$. On the other hand, since $m$ is even, $d=2m+1\equiv1\pmod4$ and $\exp(\pi\ii d/2)=\ii$. Substituting $\exp(\pi\ii d/2)=\ii$ in \eqref{xueq12} therefore yields $G_\Gamma(\sqrt{2}\Lambda^*/\Gamma)=(1-\ii)/(1+\ii)=-\ii$.
\end{proof}

In the following lemmas, we consider the Sylow $p$-subgroup $\operatorname{Syl}_p(\sqrt2\Lambda^*/\Gamma)$ of $\sqrt2\Lambda^*/\Gamma$ for a given prime $p$. Recall that
\[
\operatorname{Syl}_p(\sqrt2\Lambda^*/\Gamma)
=\{\sqrt2\bm\alpha+\Gamma\in\sqrt2\Lambda^*/\Gamma:
p^a\cdot \sqrt2\bm\alpha\in \Gamma
\text{ for some integer }a\geq0\}.
\]

\begin{lemma}\label{lem:norm-denominators}
Let $p$ be a prime and let $\sqrt2\bm\alpha+\Gamma\in\operatorname{Syl}_p(\sqrt{2}\Lambda^*/\Gamma)$. The following assertions hold.
\begin{enumerate}
\item[(i)] If $\langle\bm\alpha,\bm\alpha\rangle=\frac{u}{v}\neq 0$, where $u,v$ are relatively prime positive integers, then $v$ is a power of $p$.
Moreover, in this case,
\begin{equation}\label{eq:norm-valuation}
2\nu_p(\langle\bm\alpha,\bm\alpha\rangle)+\nu_p(6m(m+1))\geq0.
\end{equation}

\item[(ii)] If $\sqrt2\bm\alpha+\Gamma\ne\Gamma$,
then there is a class $\sqrt2\bm\beta+\Gamma\in\operatorname{Syl}_p(\sqrt{2}\Lambda^*/\Gamma)$ such that $2\langle\bm\alpha,\bm\beta\rangle\notin\Z$.

\item[(iii)] If $\langle\bm\beta,\bm\beta\rangle\in\Z$ for every
$\sqrt2\bm\beta+\Gamma\in\operatorname{Syl}_p(\sqrt{2}\Lambda^*/\Gamma)$, then $\operatorname{Syl}_p(\sqrt{2}\Lambda^*/\Gamma)=\{\Gamma\}$.
\end{enumerate}
\end{lemma}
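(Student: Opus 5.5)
For part (i), I would start from the definition of the Sylow subgroup. If $\sqrt2\bm\alpha+\Gamma\in\operatorname{Syl}_p(\sqrt2\Lambda^*/\Gamma)$, then $p^a\sqrt2\bm\alpha\in\Gamma=\Lambda_+/\sqrt2$ for some $a\geq0$. This means $2p^a\bm\alpha\in\Lambda_+$.

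Since $\Gamma$ is even by Lemma~\ref{lem:even}~(ii), every vector of $\Lambda_+$ has squared norm in $4\Z$. Hence $4p^{2a}\langle\bm\alpha,\bm\alpha\rangle\in4\Z$, so $p^{2a}\langle\bm\alpha,\bm\alpha\rangle\in\Z$. This forces the reduced denominator $v$ to divide $p^{2a}$, so $v$ is a power of $p$.

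For \eqref{eq:norm-valuation}, I would use the fourth-moment identity \eqref{eq:S4}. Since $\bm\alpha\in\Lambda^*$, all $\langle\bm x,\bm\alpha\rangle$ are integers. Hence $6m(m+1)\langle\bm\alpha,\bm\alpha\rangle^2\in\Z$, and taking $\nu_p$ gives the inequality. (If $\nu_p(\langle\bm\alpha,\bm\alpha\rangle)\geq0$ the inequality is trivial anyway.)

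For part (ii), I would first show that the pairing $(\sqrt2\bm\alpha,\sqrt2\bm\beta)\mapsto 2\langle\bm\alpha,\bm\beta\rangle \bmod \Z$ is nondegenerate on all of $\sqrt2\Lambda^*/\Gamma$. Suppose $2\langle\bm\alpha,\bm\beta\rangle\in\Z$ for all $\bm\beta\in\Lambda^*$. Then $2\bm\alpha\in(\Lambda^*)^*=\Lambda$. Moreover $\langle2\bm\alpha,\bm x_0\rangle=2\langle\bm\alpha,\bm x_0\rangle$ is even, so by the description \eqref{xueq30} of $\Lambda_+$ we get $2\bm\alpha\in\Lambda_+$. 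That is, $\sqrt2\bm\alpha\in\Gamma$, a contradiction. So some $\bm\beta\in\Lambda^*$ has $2\langle\bm\alpha,\bm\beta\rangle\notin\Z$.

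Next I would decompose $\sqrt2\bm\beta+\Gamma$ into its Sylow components. The pairing of $\sqrt2\bm\alpha$ with the $r$-components for $r\neq p$ is $\langle\sqrt2\bm\alpha,\sqrt2\bm\beta_r\rangle\in\Z$ by Lemma~\ref{lem:integer-inner-products}, applied with $L=\Gamma$ and $H=\sqrt2\Lambda^*/\Gamma\subseteq\Gamma^*/\Gamma$. Hence the $p$-component of $\sqrt2\bm\beta+\Gamma$ already gives a non-integral value, and it is the required class. The only real point here is the nondegeneracy, and it is handled by the parity characterization of $\Lambda_+$ from Lemma~\ref{lem:even}~(i).

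For part (iii), I would argue directly. Suppose every $\sqrt2\bm\beta+\Gamma$ in the Sylow subgroup has $\langle\bm\beta,\bm\beta\rangle\in\Z$. Then for any two such classes $\sqrt2\bm\alpha+\Gamma$ and $\sqrt2\bm\beta+\Gamma$, polarization gives
\[
4\langle\bm\alpha,\bm\beta\rangle=2\langle\bm\alpha+\bm\beta,\bm\alpha+\bm\beta\rangle-2\langle\bm\alpha,\bm\alpha\rangle-2\langle\bm\beta,\bm\beta\rangle\in2\Z.
\]
Hence $2\langle\bm\alpha,\bm\beta\rangle\in\Z$ for all pairs. By (ii), this is impossible unless every class is trivial, so the Sylow subgroup is $\{\Gamma\}$.

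Alternatively, one could invoke Lemma~\ref{lem:integral-norm-group} to get order $4^s$ and then conclude, but the direct polarization argument avoids any case split on $p$. I expect part (ii), namely the nondegeneracy of the restricted pairing, to be the main step.
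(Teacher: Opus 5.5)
Your proposal is correct and follows the paper's proof essentially step for step: evenness of $\Gamma$ and the fourth-moment identity \eqref{eq:S4} for (i); $(\Lambda^*)^*=\Lambda$, the parity description \eqref{xueq30} of $\Lambda_+$ and Lemma~\ref{lem:integer-inner-products} for (ii); and polarization combined with (ii) for (iii). The only cosmetic difference is that you argue (ii) directly, first finding a global $\bm\beta\in\Lambda^*$ and then passing to its $p$-component, whereas the paper argues by contradiction.
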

\begin{proof}
(i) For some $a\geq0$, $p^a\sqrt2\bm\alpha\in\Gamma$. Since $\Gamma$ is even, its squared norm $2p^{2a}\langle\bm\alpha,\bm\alpha\rangle$ is an even integer. Hence, $p^{2a}\langle\bm\alpha,\bm\alpha\rangle=\frac{p^{2a} u}{v}\in\Z$, implying that $v$ is a power of $p$. The fourth-moment identity \eqref{eq:S4} is a sum of integer fourth powers when $\bm\alpha\in\Lambda^*$. Hence, we have $6m(m+1)\langle\bm\alpha,\bm\alpha\rangle^2\in\Z$, which gives \eqref{eq:norm-valuation}.

(ii) We prove by contradiction. Let $\sqrt{2}\bm{\alpha}+\Gamma\in\operatorname{Syl}_p(\sqrt{2}\Lambda^*/\Gamma)\setminus\{\Gamma\}$. Assume that $2\langle\bm\alpha,\bm{\alpha'} \rangle\in\Z$ for any $\sqrt2\bm\alpha'+\Gamma\in\operatorname{Syl}_p(\sqrt{2}\Lambda^*/\Gamma)$. Take any $\bm\beta\in\Lambda^*$ and decompose $\sqrt{2}\bm{\beta}+\Gamma$ into its primary components. By Lemma \ref{lem:even} (ii), we have $\Gamma\subseteq\sqrt2\Lambda^*\subseteq\Gamma^*$. Applying Lemma \ref{lem:integer-inner-products} with $L=\Gamma$ and $H=\sqrt2\Lambda^*/\Gamma$ shows that $\sqrt2\bm\alpha$ has integer inner product with every representative of every component of $\sqrt2\bm\beta+\Gamma$ in $\operatorname{Syl}_r(\sqrt2\Lambda^*/\Gamma)$ with $r\ne p$. Since $\sqrt{2}\bm{\alpha}+\Gamma$ also has integer inner product with the prime component of $\sqrt{2}\bm{\beta}+\Gamma$ in $\operatorname{Syl}_{p}(\sqrt{2}\Lambda^*/\Gamma)$, we have $2\langle\bm\alpha,\bm\beta\rangle\in\Z$ for every $\bm\beta\in\Lambda^*$. Then $2\bm\alpha\in(\Lambda^*)^*=\Lambda$. Fix $\bm x_0\in X$. Since $\langle2\bm\alpha,\bm x_0\rangle =2\langle\bm\alpha,\bm x_0\rangle\in2\Z$, by \eqref{xueq30} we have $2\bm\alpha\in\Lambda_+$. Consequently, $\sqrt2\bm\alpha\in\Gamma$. This contradicts $\sqrt{2}\bm{\alpha}+\Gamma\in\operatorname{Syl}_p(\sqrt{2}\Lambda^*/\Gamma)\setminus\{\Gamma\}$. Hence, we prove the lemma.

(iii) Take any $\sqrt2\bm\alpha+\Gamma, \sqrt2\bm\beta+\Gamma\in \operatorname{Syl}_p(\sqrt{2}\Lambda^*/\Gamma)$. Their sum $\sqrt2\bm\alpha+\sqrt2\bm\beta+\Gamma$ also lies in $\operatorname{Syl}_p(\sqrt{2}\Lambda^*/\Gamma)$. Hence,
\[
2\langle\bm\alpha,\bm\beta\rangle
=\langle\bm\alpha+\bm\beta,\bm\alpha+\bm\beta\rangle
-\langle\bm\alpha,\bm\alpha\rangle
-\langle\bm\beta,\bm\beta\rangle\in\Z.
\]
By (ii) we must have $\operatorname{Syl}_p(\sqrt{2}\Lambda^*/\Gamma)=\{\Gamma\}$.
\end{proof}

\begin{lemma}\label{lem:local}
The following assertions hold.
\begin{enumerate}
\item[(i)] If $\nu_2(m(m+1))\leq5$, then every
$\sqrt2\bm{\alpha}+\Gamma\in\operatorname{Syl}_{2}(\sqrt{2}\Lambda^*/\Gamma)$ satisfies $2\langle\bm{\alpha},\bm{\alpha}\rangle\in\Z$. Moreover, $\nu_2(\det\Lambda)$ is even.
\item[(ii)] If $p\geq5$ is prime and $\nu_p(m(m+1))\leq1$,
then $\operatorname{Syl}_p(\sqrt{2}\Lambda^*/\Gamma)=\{\Gamma\}$.
\end{enumerate}
\end{lemma}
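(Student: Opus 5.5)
For part~(ii) I will combine the valuation bound of Lemma~\ref{lem:norm-denominators}~(i) with the triviality criterion of Lemma~\ref{lem:norm-denominators}~(iii). For part~(i) I will combine the strengthened integrality condition of Lemma~\ref{lem:J} with the counting result Lemma~\ref{lem:integral-norm-group}.

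\emph{Part (ii).} Let $p\geq5$ with $\nu_p(m(m+1))\leq1$, and take $\sqrt2\bm\alpha+\Gamma\in\operatorname{Syl}_p(\sqrt2\Lambda^*/\Gamma)$ with $t=\langle\bm\alpha,\bm\alpha\rangle\neq0$.
\begin{enumerate}
\item[(1)] By Lemma~\ref{lem:norm-denominators}~(i), the reduced denominator of $t$ is a power of $p$.
\item[(2)] Since $p\geq5$, we have $\nu_p(6m(m+1))=\nu_p(m(m+1))\leq1$. Then \eqref{eq:norm-valuation} gives $2\nu_p(t)\geq-1$, hence $\nu_p(t)\geq0$ because $\nu_p(t)$ is an integer.
\item[(3)] Together with (1), this shows $t\in\Z$. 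The case $t=0$ is trivially integral.
\item[(4)] So every class in the Sylow $p$-subgroup has integral $\langle\bm\beta,\bm\beta\rangle$, and Lemma~\ref{lem:norm-denominators}~(iii) gives $\operatorname{Syl}_p(\sqrt2\Lambda^*/\Gamma)=\{\Gamma\}$.
\end{enumerate}

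\emph{Part (i), first claim.} Take $\sqrt2\bm\alpha+\Gamma$ in the Sylow $2$-subgroup and write $t=\langle\bm\alpha,\bm\alpha\rangle=u/2^k$ with $u$ odd. The denominator is a power of $2$ by Lemma~\ref{lem:norm-denominators}~(i). Suppose, for contradiction, that $k\geq2$.
\begin{enumerate}
\item[(1)] Since $d$ is odd, $\nu_2(3t-d)=-k$, so $\nu_2\bigl(t(3t-d)\bigr)=-2k$.
\item[(2)] Lemma~\ref{lem:J} then requires $\nu_2(m(m+1))-2k-2\geq0$.
\item[(3)] This means $\nu_2(m(m+1))\geq2k+2\geq6$, contradicting the hypothesis $\nu_2(m(m+1))\leq5$.
\end{enumerate}
Hence $k\leq1$, i.e.\ $2\langle\bm\alpha,\bm\alpha\rangle\in\Z$. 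One could also first use \eqref{eq:norm-valuation} to get $k\leq3$, but Lemma~\ref{lem:J} alone suffices. This is the only step needing care, because the bare fourth-moment bound is too weak here.

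\emph{Part (i), determinant parity.} Apply Lemma~\ref{lem:integral-norm-group} with $L=\Gamma$, which is even by Lemma~\ref{lem:even}~(ii), and $H=\operatorname{Syl}_2(\sqrt2\Lambda^*/\Gamma)$.
\begin{enumerate}
\item[(1)] The squared norm of $\sqrt2\bm\alpha$ is $2\langle\bm\alpha,\bm\alpha\rangle$, which is an integer by the first claim.
\item[(2)] The nondegeneracy hypothesis is exactly Lemma~\ref{lem:norm-denominators}~(ii), because $\langle\sqrt2\bm\alpha,\sqrt2\bm\beta\rangle=2\langle\bm\alpha,\bm\beta\rangle$.
\item[(3)] The lemma therefore gives $|H|=4^s$ for some integer $s\geq0$.
\end{enumerate}
The $2$-part of $|\sqrt2\Lambda^*/\Gamma|$ is $|H|$. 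Lemma~\ref{lem:even}~(iii) gives $\det\Lambda=2^{n-1}|\sqrt2\Lambda^*/\Gamma|$, so $\nu_2(\det\Lambda)=n-1+2s$. Since $n=4m(m+1)-1$ is odd, $n-1$ is even, and so $\nu_2(\det\Lambda)$ is even.
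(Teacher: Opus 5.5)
Your proposal is correct and follows essentially the same route as the paper. Part~(ii) goes through the valuation bound \eqref{eq:norm-valuation} and Lemma~\ref{lem:norm-denominators}~(iii), and part~(i) through the condition $\nu_2(m(m+1))-2k-2\geq0$ forced by Lemma~\ref{lem:J}, followed by Lemma~\ref{lem:integral-norm-group} and $\det\Lambda=2^{n-1}|\sqrt2\Lambda^*/\Gamma|$. The only thing to tidy is the trivial case $t=0$ (i.e.\ $\bm\alpha=\bm0$) in part~(i), which you should set aside before writing $t=u/2^k$ with $u$ odd.
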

\begin{proof}
(i) Suppose $\nu_2(m(m+1))\leq5$, and take $\sqrt2\bm\alpha+\Gamma\in\operatorname{Syl}_2(\sqrt2\Lambda^*/\Gamma)$. If $\bm\alpha=\bm0$, then $2\langle\bm\alpha,\bm\alpha\rangle=0\in\Z$. Now suppose $\bm\alpha\ne\bm0$. By Lemma \ref{lem:norm-denominators} (i), the reduced denominator of $\langle\bm\alpha,\bm\alpha\rangle$ is a power of $2$. If $\nu_2(\langle\bm\alpha,\bm\alpha\rangle)\geq0$, this denominator is $1$, so $\langle\bm\alpha,\bm\alpha\rangle\in\Z$ and hence $2\langle\bm\alpha,\bm\alpha\rangle\in\Z$. It remains to consider $\nu_2(\langle\bm\alpha,\bm\alpha\rangle)=-h<0$. Then $\nu_2(3\langle\bm\alpha,\bm\alpha\rangle-d)=-h$ since $d=2m+1$ is odd. The integer in \eqref{eq:J} has $2$-adic valuation
\[
\nu_2(m(m+1))-2h-2\geq0.
\]
It follows that $2h\leq3$, so $h=1$. The reduced denominator of $\langle\bm\alpha,\bm\alpha\rangle$ is then $2$, and therefore $2\langle\bm\alpha,\bm\alpha\rangle\in\Z$.

It remains to prove that $\nu_2(\det\Lambda)$ is even. Lemma \ref{lem:even} (ii) shows that $\Gamma$ is even and identifies $\operatorname{Syl}_{2}(\sqrt{2}\Lambda^*/\Gamma)$ as a subgroup of $\Gamma^*/\Gamma$. Combining with Lemma \ref{lem:norm-denominators} (ii), we can apply Lemma \ref{lem:integral-norm-group} with $L=\Gamma$ and $H=\operatorname{Syl}_{2}(\sqrt{2}\Lambda^*/\Gamma)$ to obtain that $|\operatorname{Syl}_{2}(\sqrt{2}\Lambda^*/\Gamma)|=4^s=2^{2s}$ for some $s\geq0$. Finally, Lemma \ref{lem:even} (iii) gives $\nu_2(\det\Lambda)=n-1+2s$, which is even because $n=4m(m+1)-1$ is odd.

(ii) Suppose that $p\geq5$ and $\nu_p(m(m+1))\leq1$. For a class $\sqrt2\bm{\alpha}+\Gamma\in\operatorname{Syl}_p(\sqrt{2}\Lambda^*/\Gamma)$, put $t=\langle\bm\alpha,\bm\alpha\rangle$. If $t=0$, then $t\in\Z$. If $t\ne0$, Lemma \ref{lem:norm-denominators} (i) makes its reduced denominator a power of $p$. Equation \eqref{eq:norm-valuation} and $\nu_p(6m(m+1))\leq1$ imply $\nu_p(t)\geq0$. Thus this denominator is $1$, and again $t\in\Z$. Lemma \ref{lem:norm-denominators} (iii) now gives $\operatorname{Syl}_p(\sqrt{2}\Lambda^*/\Gamma)=\{\Gamma\}$.

\end{proof}

\section{Proof of Theorem~\ref{thm:main}}\label{sec:first}
\begin{lemma}\label{lem:three-trivial}
If $3\nmid m(m+1)$, then $\operatorname{Syl}_{3}(\sqrt{2}\Lambda^*/\Gamma)=\{\Gamma\}$.
\end{lemma}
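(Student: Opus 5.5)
The plan is to reduce to Lemma~\ref{lem:norm-denominators}~(iii): it suffices to show that every class $\sqrt2\bm\alpha+\Gamma\in\operatorname{Syl}_3(\sqrt2\Lambda^*/\Gamma)$ has $t=\langle\bm\alpha,\bm\alpha\rangle\in\Z$.

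First take such a class with $t\neq0$. By Lemma~\ref{lem:norm-denominators}~(i), the reduced denominator of $t$ is a power of $3$. The hypothesis $3\nmid m(m+1)$ gives $\nu_3(6m(m+1))=1$. Then \eqref{eq:norm-valuation} gives $2\nu_3(t)+1\geq0$, hence $\nu_3(t)\geq0$ because $\nu_3(t)$ is an integer. So the denominator is $1$ and $t\in\Z$. The case $t=0$ is trivial.

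With integrality of all squared norms in the Sylow $3$-subgroup established, Lemma~\ref{lem:norm-denominators}~(iii) yields $\operatorname{Syl}_3(\sqrt2\Lambda^*/\Gamma)=\{\Gamma\}$. There is no real obstacle. The only point to check is the valuation bookkeeping: the factor $6$ contributes exactly one power of $3$, and that alone is insufficient to allow a denominator $3$, since $2\cdot(-1)+1<0$. Lemma~\ref{lem:J} is not needed here.
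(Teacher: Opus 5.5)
Your proposal is correct and matches the paper's proof essentially step for step. Like the paper, you use Lemma~\ref{lem:norm-denominators}~(i) and \eqref{eq:norm-valuation} with $\nu_3(6m(m+1))=1$ to get $\nu_3(t)\geq0$, hence $t\in\Z$, and then conclude with Lemma~\ref{lem:norm-denominators}~(iii).
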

\begin{proof}
Let $\sqrt2\bm\alpha+\Gamma$ belong to $\operatorname{Syl}_{3}(\sqrt{2}\Lambda^*/\Gamma)$. We show that $\langle\bm\alpha,\bm\alpha\rangle\in\Z$. This is immediate if $\bm\alpha=\bm0$. If $\bm\alpha\ne\bm0$, write $\langle\bm\alpha,\bm\alpha\rangle=\frac{u}{v}$, where $u,v$ are relatively prime positive integers. By Lemma \ref{lem:norm-denominators} (i), $v$ is a power of $3$. Equation \eqref{eq:norm-valuation} and $\nu_3(6m(m+1))=1$ give $2\nu_3(\langle\bm\alpha,\bm\alpha\rangle)+1\geq0$. As $\nu_3(\langle\bm\alpha,\bm\alpha\rangle)$ is an integer, $\nu_3(\langle\bm\alpha,\bm\alpha\rangle)\geq0$. Consequently, $v=1$, so $\langle\bm\alpha,\bm\alpha\rangle\in\Z$ in this case as well. Applying Lemma \ref{lem:norm-denominators} (iii), we obtain $\operatorname{Syl}_{3}(\sqrt{2}\Lambda^*/\Gamma)=\{\Gamma\}$.
\end{proof}

Now we can give a proof of Theorem~\ref{thm:main}.

\begin{proof}[Proof of Theorem~\ref{thm:main}]
Suppose that the asserted design exists. By \eqref{eq:gauss-primary-product},
\begin{equation}\label{xueq34}
G_\Gamma(\sqrt{2}\Lambda^*/\Gamma)
=\prod_{p\text{ prime}}G_\Gamma(\operatorname{Syl}_p(\sqrt{2}\Lambda^*/\Gamma)).
\end{equation}
For every $\sqrt2\bm\alpha+\Gamma\in\operatorname{Syl}_{2}(\sqrt{2}\Lambda^*/\Gamma)$, Lemma \ref{lem:local} (i) gives $2\langle\bm\alpha,\bm\alpha\rangle\in\Z$. Hence, every summand defining $G_\Gamma(\operatorname{Syl}_{2}(\sqrt{2}\Lambda^*/\Gamma))$ is $1$ or $-1$, so $G_\Gamma(\operatorname{Syl}_{2}(\sqrt{2}\Lambda^*/\Gamma))\in\R$. Since $m\equiv1\pmod3$, we have $3\nmid m(m+1)$. By Lemma \ref{lem:three-trivial}, we have $\operatorname{Syl}_{3}(\sqrt{2}\Lambda^*/\Gamma)=\{\Gamma\}$, so $G_\Gamma(\operatorname{Syl}_3(\sqrt{2}\Lambda^*/\Gamma))=1$. For each prime $p\geq7$ with $p\equiv3\pmod4$, since $\nu_p(m(m+1))\leq1$, Lemma \ref{lem:local} (ii) shows that $\operatorname{Syl}_p(\sqrt{2}\Lambda^*/\Gamma)=\{\Gamma\}$ and hence $G_\Gamma(\operatorname{Syl}_p(\sqrt{2}\Lambda^*/\Gamma))=1$. For each prime $p$ with $p\equiv1\pmod4$, Lemma \ref{lem:real-gauss} shows that $G_\Gamma(\operatorname{Syl}_p(\sqrt{2}\Lambda^*/\Gamma))$ is real for each such primary subgroup. Putting all these together, we can obtain from \eqref{xueq34} that $G_\Gamma(\sqrt{2}\Lambda^*/\Gamma)$ is real. However, this contradicts Lemma \ref{lem:phase}, which shows that $G_\Gamma(\sqrt{2}\Lambda^*/\Gamma)=-\ii$. Hence, such a tight spherical $5$-design does not exist.
\end{proof}

\section{Proof of Theorem~\ref{thm:second}}\label{sec:second}
In this section, we prove Theorem~\ref{thm:second} by computing the residue of $(\det\Lambda)/3$ modulo $3$ in two ways. Assuming that such a design exists, we first use the Gauss-sum identities to show that $\operatorname{Syl}_3(\sqrt2\Lambda^*/\Gamma)\ne\{\Gamma\}$. Lemma \ref{lem:three-primary} (ii) then gives $\nu_3(\det\Lambda)=1$. Lemma~\ref{lem:determinant-residue} then gives $\frac{\det\Lambda}{3}\equiv 2\pmod 3$. The restrictions on prime divisors, together with Lemma \ref{lem:local} (i),(ii), instead give $\frac{\det\Lambda}{3}\equiv 1\pmod 3$. These residues contradict each other, so such a design does not exist.

\subsection{Lemmas}

We first establish several lemmas using only elementary linear algebra. Their proofs are postponed to the appendix.

The following statement holds over any field $\F$, including fields of characteristic $2$. It will be applied with $\F=\F_3:=\Z/3\Z$. The proof uses only a change of basis and a determinant calculation.

\begin{lemma}\label{lem:constant-pairing-determinant}
Let $s\geq2$ be an integer, and let $\bm D$ be an invertible symmetric matrix of order $2s-1$ over a field $\F$. Suppose that $\bm z_1,\ldots,\bm z_{s}\in\F^{2s-1}$ are linearly independent and satisfy
\begin{equation}\label{eq:constant-pairing-hypothesis}
\bm z_i^{\mathsf T}\bm D\bm z_j=1
\qquad(1\leq i,j\leq s).
\end{equation}
There exists $a\in\F\setminus\{0\}$ such that $\det\bm D=(-1)^{s-1} a^2$.
\end{lemma}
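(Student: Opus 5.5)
Change basis so the vectors $\bm z_i$ become simpler. Set $\bm w_1=\bm z_1$ and $\bm w_i=\bm z_i-\bm z_1$ for $2\leq i\leq s$. These are still linearly independent. By \eqref{eq:constant-pairing-hypothesis} we have
\[
\bm w_1^{\mathsf T}\bm D\bm w_1=1,\qquad \bm w_1^{\mathsf T}\bm D\bm w_i=0,\qquad \bm w_i^{\mathsf T}\bm D\bm w_j=0\quad(2\leq i,j\leq s).
\]
So $W=\operatorname{span}(\bm w_2,\ldots,\bm w_s)$ is a totally isotropic subspace of dimension $s-1$ for the nondegenerate symmetric bilinear form $B(\bm x,\bm y)=\bm x^{\mathsf T}\bm D\bm y$. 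Moreover $\bm w_1$ is anisotropic and orthogonal to $W$. Only $B(\bm x,\bm y)$ is used, never $B(\bm x,\bm x)/2$, so the argument makes no assumption on the characteristic.

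Next, split off $\bm w_1$. Since $B(\bm w_1,\bm w_1)=1\neq0$, the space decomposes as $\F^{2s-1}=\F\bm w_1\oplus\bm w_1^{\perp}$. The restriction of $B$ to $\bm w_1^{\perp}$ (dimension $2s-2$) is nondegenerate and contains $W$. Choose a basis of $\bm w_1^{\perp}$ of the form $\bm w_2,\ldots,\bm w_s,\bm y_1,\ldots,\bm y_{s-1}$. Its Gram matrix is $\begin{pmatrix}\bm 0&\bm C\\ \bm C^{\mathsf T}&\bm E\end{pmatrix}$ with $\bm C,\bm E$ of order $s-1$. A block determinant computation gives determinant $\det\begin{pmatrix}\bm 0&\bm C\\ \bm C^{\mathsf T}&\bm E\end{pmatrix}=(-1)^{s-1}(\det\bm C)^2$: expand by moving the zero block, or observe that the matrix is block anti-triangular. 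Nondegeneracy then forces $\det\bm C\neq0$.

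The Gram matrix of $B$ in the full basis $\bm w_1,\bm w_2,\ldots,\bm w_s,\bm y_1,\ldots,\bm y_{s-1}$ is therefore block diagonal $\operatorname{diag}(1,\cdot)$, with determinant $(-1)^{s-1}(\det\bm C)^2$. If $\bm P$ is the change-of-basis matrix whose columns are these vectors, then $\bm P^{\mathsf T}\bm D\bm P$ equals that Gram matrix. Hence
\[
\det\bm D\cdot(\det\bm P)^2=(-1)^{s-1}(\det\bm C)^2,
\]
and taking $a=\det\bm C/\det\bm P\neq0$ finishes the proof.

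The only step needing care is the block-determinant sign: $\det\begin{pmatrix}\bm 0&\bm C\\ \bm C^{\mathsf T}&\bm E\end{pmatrix}=(-1)^{(s-1)^2}\det\bm C\det\bm C^{\mathsf T}$. This comes from permuting the $s-1$ column pairs, or from a Laplace expansion along the first $s-1$ rows, and $(-1)^{(s-1)^2}=(-1)^{s-1}$. The existence of the complement vectors $\bm y_j$ is just extension of the independent set $\bm w_2,\ldots,\bm w_s$ to a basis of $\bm w_1^\perp$. That $W\subseteq\bm w_1^\perp$ follows from $B(\bm w_1,\bm w_i)=0$. None of this depends on the characteristic.
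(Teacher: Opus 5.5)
Your proof is correct and follows essentially the same route as the paper. The paper also passes to $\bm z_1$ and the differences $\bm z_{i+1}-\bm z_1$, completes them to a basis whose extra vectors are made orthogonal to $\bm z_1$, and evaluates the resulting block anti-triangular Gram determinant as $(-1)^{s-1}(\det\bm A)^2$. The only difference is cosmetic: you invoke the splitting $\F^{2s-1}=\F\bm w_1\oplus\bm w_1^{\perp}$ abstractly, while the paper makes the completing vectors orthogonal by explicitly subtracting $\langle\bm z_1,\bm h_j\rangle_{\bm D}\bm z_1$.
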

\begin{proof}
See Appendix~\ref{app:constant-pairing-determinant}.
\end{proof}

The following construction gives orthogonal vectors using only integer linear combinations, with change-of-basis determinant not divisible by $3$. It may also be obtained from the Jordan decomposition of lattices over valuation rings (see Zemel \cite[Proposition~1.2 and Corollary~1.3]{ZemelValuation}). 
Here, we include a short direct proof.

\begin{lemma}\label{lem:rational-diagonalization}
Let $\bm B\in\mathbb{Z}^{n\times n}$ be a nonsingular symmetric integral matrix. There is an integral matrix $\bm P\in\mathbb{Z}^{n\times n}$ with $3\nmid\det\bm P$ such that
\begin{equation}\label{eq:rational-diagonalization}
\bm P^{\mathsf T}\bm B\bm P
=\operatorname{diag} (3^{a_1}u_1,\ldots,3^{a_n}u_n),
\qquad a_j\in\Z_{\geq 0},\quad u_j\in\Z,\quad 3\nmid u_j.
\end{equation}
\end{lemma}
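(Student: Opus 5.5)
We prove Lemma~\ref{lem:rational-diagonalization} by induction on $n$, running a $3$-adic Gram--Schmidt process in which every basis change is an integral matrix whose determinant is prime to $3$. Such matrices are closed under products. For $n=1$ there is nothing to do: write $\bm B=(b)$ with $b\neq0$, and factor $b=3^{a}u$ with $3\nmid u$.

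For $n\geq2$, let $a=\min_{i,j}\nu_3(B_{i,j})$ over the nonzero entries; this is finite because $\bm B$ is nonsingular. The first step is to put an entry of minimal valuation on the diagonal.

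\emph{Case 1: some diagonal entry $B_{i,i}$ has valuation $a$.} Apply a permutation matrix, which has determinant $\pm1$, to move it to position $(1,1)$.

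\emph{Case 2: every entry of valuation $a$ is off-diagonal.} Choose $B_{i,j}$ with $i\ne j$ and $\nu_3(B_{i,j})=a$. Replace the basis vector $\bm e_i$ by $\bm e_i+\bm e_j$. This is a unimodular change, and the new diagonal entry is
\[
B_{i,i}+2B_{i,j}+B_{j,j}.
\]
Here $B_{i,i}$ and $B_{j,j}$ are divisible by $3^{a+1}$, while $2B_{i,j}$ has valuation exactly $a$. So the new diagonal entry has valuation $a$, and the minimal valuation of the matrix is still $a$, since the new entries are integer combinations of the old ones. We are now in Case~1.

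So we may assume $b:=B_{1,1}=3^{a}u$ with $3\nmid u$, and $3^{a}\mid B_{1,j}$ for all $j$. The second step clears the first row and column. For each $j\geq2$, replace $\bm e_j$ by $u\,\bm e_j-(B_{1,j}/3^{a})\,\bm e_1$. This is an integral upper-triangular change whose determinant is $u^{n-1}$, which is prime to $3$. The new $(1,j)$ entry is
\[
u B_{1,j}-\frac{B_{1,j}}{3^{a}}\,b=0.
\]
Thus $\bm P_1^{\mathsf T}\bm B\bm P_1=\operatorname{diag}(b)\oplus\bm B'$. Here $\bm B'$ is an integral symmetric matrix of order $n-1$, and it is nonsingular because $\det\bm B\neq0$.

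The final step applies induction to $\bm B'$, giving $\bm P'$, and takes $\bm P=\bm P_1(1\oplus\bm P')$. Then $\det\bm P=u^{n-1}\det\bm P'$ up to sign, which is not divisible by $3$.

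The only real subtlety is Case~2. It works because $2$ is invertible modulo $3$, which ensures the symmetrization $\bm e_i+\bm e_j$ does not lose the minimal valuation. Indeed, this is exactly where the argument would fail for the prime $2$. Everything else is routine bookkeeping.
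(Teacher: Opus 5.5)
Your proposal is correct and follows essentially the same route as the paper's proof. The paper also inducts on $n$, puts an entry of minimal $3$-valuation on the diagonal (by a permutation, or by $\bm e_i\mapsto\bm e_i+\bm e_j$ producing $b_{i,i}+2b_{i,j}+b_{j,j}$), and then clears the first row and column with the same upper-triangular matrix of determinant $u^{n-1}$ before applying the induction hypothesis to the complementary block.
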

\begin{proof}

See Appendix~\ref{app:rational-diagonalization}.

\end{proof}

For an integer matrix $\bm C=(\bm c_1\ \cdots\ \bm c_t)$, define $\operatorname{rank}_{\F_3}\bm C$ as the largest number $\rho$ of distinct columns $\bm c_{j_1},\ldots,\bm c_{j_\rho}$ such that, for all integers $a_1,\ldots,a_\rho$,
\[
\sum_{i=1}^{\rho}a_i\bm c_{j_i}\equiv\bm0\pmod3
\quad\Longrightarrow\quad
a_1\equiv\cdots\equiv a_\rho\equiv0\pmod3.
\]

This rank estimate is the standard consequence of the Smith normal form over $\Z$ \cite[Theorem~2.1]{StanleySNF}. We give a short direct proof used here.

\begin{lemma}\label{lem:rank-reduction-three}
Let $\bm C$ be a nonsingular integral matrix of order $t$. Then
\begin{equation}\label{eq:rank-reduction-three}
\operatorname{rank}_{\F_3}\bm C\geq t-\nu_3(\det\bm C).
\end{equation}
\end{lemma}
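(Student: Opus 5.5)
We prove the rank bound by a direct integral column-reduction argument, essentially a weak form of the Smith normal form over $\Z$ localized at $3$, using only unimodular operations and induction on $t$. Let $\rho=\operatorname{rank}_{\F_3}\bm C$. The aim is to find a matrix $\bm Q\in\Z^{t\times t}$ with $\det\bm Q=\pm1$ such that $\bm C\bm Q$ has exactly $t-\rho$ columns whose entries are all divisible by $3$. Since $|\det(\bm C\bm Q)|=|\det\bm C|$, factoring $3$ out of each such column gives $3^{t-\rho}\mid\det\bm C$, i.e. $\nu_3(\det\bm C)\geq t-\rho$. Nonsingularity is used only to make $\nu_3(\det\bm C)$ finite.

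\textbf{Step 1 (invariance).} Integral column operations do not change $\operatorname{rank}_{\F_3}$: swapping columns, and adding an integer multiple of one column to another. With the paper's definition, $\operatorname{rank}_{\F_3}\bm C$ is the largest size of a subset of columns whose reductions mod $3$ are linearly independent over $\F_3$. This equals the dimension of the $\F_3$-span of the reduced columns, since any spanning set contains a basis. Adding $k\bm c_j$ to $\bm c_i$ does not change this span, and neither does a permutation. So $\rho$ is invariant under such operations.

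\textbf{Step 2 (reduction).} Reduce mod $3$ and choose $\rho$ columns whose reductions are independent over $\F_3$; permute them to the front. Each remaining column $\bm c_k$ with $k>\rho$ is, mod $3$, an $\F_3$-combination $\sum_{i\leq\rho}b_{ki}\bm c_i$. Lift the $b_{ki}$ to integers and replace $\bm c_k$ by $\bm c_k-\sum_i b_{ki}\bm c_i$. This is a unimodular column operation, and afterwards every column $k>\rho$ is divisible by $3$ entrywise. There are exactly $t-\rho$ such columns. Writing each as $3\bm c_k'$ and using multilinearity of the determinant gives $\det\bm C=\pm3^{t-\rho}\det\bm C'$ with $\bm C'$ integral. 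Since $\det\bm C\neq0$, we get $\nu_3(\det\bm C)\geq t-\rho$, which is \eqref{eq:rank-reduction-three}.

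\textbf{Main subtlety.} The only point needing care is matching the paper's column-subset definition of $\operatorname{rank}_{\F_3}$ with the span dimension over $\F_3$, so that the chosen $\rho$ columns really span all reduced columns. Once that is settled, the rest is routine and no induction is actually required.
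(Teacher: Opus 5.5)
Your proposal is correct and follows essentially the same route as the paper: choose $\rho$ columns forming a basis modulo $3$, write each remaining column as an integer combination of them plus $3$ times an integral column, and extract the factor $3^{t-\rho}$ from the determinant. The paper phrases that last step as a multilinear expansion in which terms with repeated columns vanish, rather than as unimodular column operations, but the two are the same argument, and your Step~1 on invariance of $\operatorname{rank}_{\F_3}$ is unnecessary, as you note yourself.
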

\begin{proof}
See Appendix~\ref{app:rank-reduction-three}.

\end{proof}

\subsection{Squared norms in the \texorpdfstring{$3$}{3}-primary subgroup}
\begin{lemma}\label{lem:three-primary}
Suppose that $m\equiv0\pmod3$ and $\nu_3(m(m+1))=1$. The following assertions hold.
\begin{enumerate}
\item[(i)] For every nonidentity class
$\sqrt2\bm\alpha+\Gamma\in\operatorname{Syl}_{3}(\sqrt{2}\Lambda^*/\Gamma)$, we have $\langle\bm\alpha,\bm\alpha\rangle\in\tfrac13+\Z$.
\item[(ii)] If
$\operatorname{Syl}_{3}(\sqrt{2}\Lambda^*/\Gamma)\ne\{\Gamma\}$, then $|\operatorname{Syl}_{3}(\sqrt{2}\Lambda^*/\Gamma)|=3$ and $\nu_3(\det\Lambda)=1$.
\end{enumerate}
\end{lemma}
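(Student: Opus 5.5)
The plan is to treat the $3$-primary group as a quadratic space over $\F_3$. I would read off a quadratic form from the fractional parts of the norms and then show that a form of this shape can only live on a group of order $3$. Parts (i) and (ii) will come out of one argument.

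\emph{Step 1 (norm denominators).} Let $\sqrt2\bm\alpha+\Gamma\in\operatorname{Syl}_3(\sqrt2\Lambda^*/\Gamma)$ and put $t=\langle\bm\alpha,\bm\alpha\rangle$.
\begin{itemize}
\item By Lemma~\ref{lem:norm-denominators}~(i), the reduced denominator of $t$ is a power of $3$.
\item Since $\nu_3(6m(m+1))=2$, \eqref{eq:norm-valuation} gives $\nu_3(t)\geq-1$. Hence $3t\in\Z$.
\item Suppose $\nu_3(t)=-1$, and write $t=w/3$ with $3\nmid w$. Then $3t-d=w-d$, and $d=2m+1\equiv1\pmod3$.
\item The integer in \eqref{eq:J} has $3$-adic valuation $1-1+\nu_3(w-d)-1$. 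This must be nonnegative, so $w\equiv d\equiv1\pmod3$.
\end{itemize}
Thus every $t$ lies in $\Z\sqcup(\tfrac13+\Z)$, and never in $\tfrac23+\Z$. Define $q(x)\in\F_3$ as $3t\bmod3$. This is well defined on classes because $\Gamma$ is even and $\sqrt2\Lambda^*\subseteq\Gamma^*$. Step 1 shows that $q$ takes only the values $0$ and $1$.

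\emph{Step 2 (elementary abelian structure and a nondegenerate form).} For classes $x=\sqrt2\bm\alpha+\Gamma$ and $y=\sqrt2\bm\beta+\Gamma$ in the $3$-primary group, polarization gives $2\langle\bm\alpha,\bm\beta\rangle\in\frac13\Z$. Define $b(x,y)=3\cdot2\langle\bm\alpha,\bm\beta\rangle\bmod3$. Then $b(x,y)=q(x+y)-q(x)-q(y)$, and $b$ is well defined and bilinear.
\begin{itemize}
\item Take any class $x$. For every $y$ in the $3$-primary group, $2\langle3\bm\alpha,\bm\beta\rangle\in\Z$. By Lemma~\ref{lem:norm-denominators}~(ii), this forces $3x=\Gamma$. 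So the $3$-primary group is an $\F_3$-vector space $V$.
\item Lemma~\ref{lem:norm-denominators}~(ii) also says that no nonzero $x$ has $b(x,\cdot)\equiv0$, so $b$ is nondegenerate on $V$.
\item Since $b(x,x)=q(2x)-2q(x)=2q(x)$, the form $q$ is a nondegenerate quadratic form on $V$ with associated bilinear form $b$.
\end{itemize}

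\emph{Step 3 (the key step: the dimension count).} Use the classical fact that a nondegenerate quadratic form over $\F_3$ of dimension at least $2$ represents every nonzero element of $\F_3$.
\begin{itemize}
\item To see this, diagonalize as $c_1x_1^2+c_2x_2^2+\cdots$ with $c_i\neq0$. Every nonzero value is already represented by $c_1x_1^2+c_2x_2^2$, by counting or a direct check over the four sign cases.
\item In particular such a form represents $2$, contradicting Step 1. Hence $\dim V\leq1$.
\item If $\dim V=1$, then $q(x)=cx^2$ with $c\neq0$, so $q(x)=c$ for every $x\ne0$. Since $q$ never equals $2$, we get $c=1$.
\end{itemize}
This gives (i): every nonidentity class has norm in $\tfrac13+\Z$. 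It also gives $|\operatorname{Syl}_3(\sqrt2\Lambda^*/\Gamma)|=3$ whenever this group is nontrivial.

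\emph{Step 4 (the determinant).} By Lemma~\ref{lem:even}~(iii), $\det\Lambda=2^{n-1}|\sqrt2\Lambda^*/\Gamma|$. The $3$-part of $|\sqrt2\Lambda^*/\Gamma|$ is the order of the Sylow $3$-subgroup, which is $3$. Hence $\nu_3(\det\Lambda)=1$, completing (ii).

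I expect the main obstacle to be Step 2, specifically checking that $b$ and $q$ are well defined on classes and that Lemma~\ref{lem:norm-denominators}~(ii) yields both the exponent-$3$ claim and nondegeneracy. Once $V$ is identified as a nondegenerate quadratic space over $\F_3$, the dimension count in Step 3 is standard.
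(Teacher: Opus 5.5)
Your proof is correct, but after Step~1 it takes a more structural route than the paper. Steps~1 and~4 match the paper's argument.

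For the middle part, the paper works by hand in two moves. First, suppose a nonidentity class has integral norm. The paper takes the partner $\bm\beta$ supplied by Lemma~\ref{lem:norm-denominators}~(ii) and notes that the norms of $\bm\beta+k\bm\alpha$, $k=0,1,2$, run through all three residues of $\frac13\Z/\Z$; this proves (i). Second, it derives (ii) from (i) by the parallelogram identity for $\bm\alpha\pm\bm\beta$, whose two sides would lie in $\frac23+\Z$ and $\frac43+\Z$ respectively.

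You instead verify that the $3$-primary part is a nondegenerate quadratic space $(V,q)$ over $\F_3$. That means checking well-definedness on classes, exponent $3$, and nondegeneracy, the last two coming from Lemma~\ref{lem:norm-denominators}~(ii). You then invoke universality of nondegenerate forms of rank at least $2$ over $\F_3$, so (i) and (ii) follow together from the single fact that $q$ omits the value $2$.

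The paper's two computations are exactly the two instances of universality you actually need. The line $\bm\beta+k\bm\alpha$ handles a plane containing a nonzero isotropic vector. The parallelogram identity handles a plane on which $q$ would equal $1$ at every nonzero vector.

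The paper's version is shorter and needs less setup: it never shows that the Sylow subgroup is elementary abelian, and it diagonalizes nothing. Yours costs that extra bookkeeping, but it shows why the order is forced to be $3$. It also adapts naturally to other primes or other patterns of omitted residues.
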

\begin{proof}
(i) Let $\sqrt2\bm\alpha+\Gamma\in\operatorname{Syl}_{3}(\sqrt{2}\Lambda^*/\Gamma)$ and denote $t=\langle\bm\alpha,\bm\alpha\rangle$. Lemma \ref{lem:norm-denominators} (i) makes the reduced denominator of $t$ a power of $3$ when $t\ne0$. Since $\nu_3(6m(m+1))=2$, \eqref{eq:norm-valuation} gives $\nu_3(t)\geq-1$ when $t\ne0$. Hence, $3t\in\Z$, including when $t=0$, and its fractional part is $0$, $1/3$, or $2/3$.

If $t\notin\Z$, we can write $t=u/3$ with $u\in\Z$ and $3\nmid u$. Substitution in \eqref{eq:J} gives
\[
\frac{m(m+1)u(u-d)}{36}\in\Z.
\]
Since $\nu_3(m(m+1))=1$ and $3\nmid u$, this forces $3\mid(u-d)$. Since $d=2m+1\equiv1\pmod3$, we obtain that every nonintegral $t$ belongs to $1/3+\Z$.

We next show that a class with integral $t$ is $\Gamma$. Suppose instead that $\sqrt2\bm\alpha+\Gamma\ne\Gamma$ and $t=\langle\bm\alpha,\bm\alpha\rangle\in\Z$. Lemma \ref{lem:norm-denominators} (ii) supplies $\sqrt2\bm\beta+\Gamma\in\operatorname{Syl}_{3}(\sqrt{2}\Lambda^*/\Gamma)$ such that $r=2\langle\bm\alpha,\bm\beta\rangle\notin\Z$. Since the squared norms of $\bm\alpha$, $\bm\beta$, and $\bm\alpha+\bm\beta$ all have reduced denominators at most $3$, their difference $r$ belongs to $\frac13\Z$. For $k=0,1,2$, direct expansion gives
\[
\langle\bm\beta+k\bm\alpha,
\bm\beta+k\bm\alpha\rangle
=\langle\bm\beta,\bm\beta\rangle+k^2t+kr.
\]
Because $t\in\Z$ and $r$ is a nonintegral multiple of $1/3$, these three squared norms have fractional parts $0$, $1/3$, and $2/3$ in some order. This contradicts the exclusion of $2/3+\Z$ above. Thus every nonidentity class $\sqrt2\bm\alpha+\Gamma$ in this subgroup satisfies $\langle\bm\alpha,\bm\alpha\rangle\in\frac13+\Z$.

(ii) Suppose that $\operatorname{Syl}_{3}(\sqrt{2}\Lambda^*/\Gamma)\ne\{\Gamma\}$. If $|\operatorname{Syl}_{3}(\sqrt{2}\Lambda^*/\Gamma)|>3$, choose nonidentity classes $\sqrt2\bm\alpha+\Gamma$ and $\sqrt2\bm\beta+\Gamma$ with the latter outside $\{\Gamma,\sqrt2\bm\alpha+\Gamma, -\sqrt2\bm\alpha+\Gamma\}$. Hence both $\sqrt2(\bm\alpha+\bm\beta)+\Gamma$ and $\sqrt2(\bm\alpha-\bm\beta)+\Gamma$ are nonidentity. However,
\[
\langle\bm\alpha+\bm\beta,\bm\alpha+\bm\beta\rangle
+\langle\bm\alpha-\bm\beta,\bm\alpha-\bm\beta\rangle
=2\langle\bm\alpha,\bm\alpha\rangle
+2\langle\bm\beta,\bm\beta\rangle.
\]
By part~(i), the left side belongs to $2/3+\Z$ and the right side to $4/3+\Z$, a contradiction. Hence, $|\operatorname{Syl}_{3}(\sqrt{2}\Lambda^*/\Gamma)|\leq3$. Its order is a power of $3$ and the subgroup is nontrivial, so its order is exactly $3$. Finally, Lemma \ref{lem:even} (iii) gives $\det\Lambda=2^{n-1}|\sqrt{2}\Lambda^*/\Gamma|$, hence $\nu_3(\det\Lambda)=\nu_3(|\sqrt{2}\Lambda^*/\Gamma|)=1$.
\end{proof}

\begin{lemma}\label{lem:symmetric-square-residue}
Suppose that $\nu_3(m(m+1))=1$. Suppose that $\bm v_1,\ldots,\bm v_n\in\Lambda$ form an orthogonal basis of $\mathbb{R}^n$ with squared norms
\[
3u_1,u_2,\ldots,u_n,\qquad u_j\in\Z,\quad 3\nmid u_j.
\]
Suppose that there is a positive integer $h$ with $3\nmid h$ such that $h\cdot \bm x$ is an integer linear combination of $\bm v_1,\ldots,\bm v_n$ for each $\bm x\in X$. Then
\begin{equation}\label{eq:symmetric-square-residue}
\prod_{j=2}^n u_j\equiv-1\pmod3.
\end{equation}
\end{lemma}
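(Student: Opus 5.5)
The plan is to pass to the symmetric square: replace each $\bm x\in X$ by the rank-one matrix $\bm x\bm x^{\mathsf T}$, compare two expressions for the Gram matrix of these $N:=|X|=n(n+1)/2$ matrices under $\langle\cdot,\cdot\rangle_{\mathrm F}$, and finish with Lemma~\ref{lem:constant-pairing-determinant} over $\F_3$.

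\emph{The direct Gram matrix.} Since $\langle\bm x\bm x^{\mathsf T},\bm y\bm y^{\mathsf T}\rangle_{\mathrm F}=\langle\bm x,\bm y\rangle^2$, \eqref{eq:scaled} gives the Gram matrix $\bm G=(d^2-1)\bm I_N+\bm J$, where $\bm J$ is the all-ones matrix. Hence
\[
\det\bm G=(d^2-1)^{N-1}(d^2-1+N),
\]
with $d^2-1=4m(m+1)$ and $d^2-1+N=2m(m+1)d^2$. Because $\nu_3(m(m+1))=1$, we have $m\not\equiv1\pmod3$, so $3\nmid d$. Therefore $\nu_3(\det\bm G)=N$, and in particular $\bm G$ is nonsingular.

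\emph{The Gram matrix in the $\bm v$-basis.} Write $h\bm x=\sum_i c_{\bm x,i}\bm v_i$ with $c_{\bm x,i}\in\Z$. Then $h^2\bm x\bm x^{\mathsf T}$ is an integer combination of the $N$ symmetric matrices
\[
\bm v_i\bm v_i^{\mathsf T}\quad\text{and}\quad \bm v_i\bm v_j^{\mathsf T}+\bm v_j\bm v_i^{\mathsf T}\ (i<j).
\]
By orthogonality of the $\bm v_i$, these matrices are mutually $\mathrm F$-orthogonal, with squared norms $|\bm v_i|^4$ and $2|\bm v_i|^2|\bm v_j|^2$. So $h^4\bm G=\bm C^{\mathsf T}\bm\Delta\bm C$, where $\bm C$ is an integral $N\times N$ matrix (nonsingular because $\bm G$ is) and $\bm\Delta$ is this diagonal matrix. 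The entries of $\bm\Delta$ involving index $1$ have $3$-adic valuations: one entry $9u_1^2$ of valuation $2$, and $n-1$ entries $6u_1u_j$ of valuation $1$. All other entries are units mod $3$, so $\nu_3(\det\bm\Delta)=n+1$. Since $3\nmid h$, this gives $2\nu_3(\det\bm C)=N-n-1$. Lemma~\ref{lem:rank-reduction-three} then yields
\[
\operatorname{rank}_{\F_3}\bm C\geq \frac{N+n+1}{2}.
\]

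\emph{The reduced system over $\F_3$.} Delete the $n$ coordinates involving index $1$. The remaining $M:=n(n-1)/2$ coordinates, indexed by $2\leq i\leq j\leq n$, give vectors $\bm z_{\bm x}\in\F_3^M$. Deleting $n$ rows lowers the rank by at most $n$, so the $\bm z_{\bm x}$ span a space of dimension at least $(N-n+1)/2=(M+1)/2$. Here $M=(4K-1)(2K-1)$ with $K=m(m+1)$ is odd, so write $M=2s-1$ and choose $s$ linearly independent vectors $\bm z_{\bm x}$. Let $\bm D$ be $\bm\Delta$ restricted to these coordinates, reduced mod $3$; it is invertible. The deleted coordinates carry weights divisible by $3$, so modulo $3$,
\[
\bm z_{\bm x}^{\mathsf T}\bm D\bm z_{\bm y}\equiv h^4\langle\bm x,\bm y\rangle^2\equiv1
\]
for all $\bm x,\bm y\in X$. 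For $\bm x=\bm y$ this uses $d^2\equiv1$. Lemma~\ref{lem:constant-pairing-determinant} then gives $\det\bm D=(-1)^{s-1}a^2$ in $\F_3$ with $a\neq0$.

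\emph{Reading off the residue.} Directly,
\[
\det\bm D=\prod_{j\geq2}u_j^2\prod_{2\leq i<j\leq n}2u_iu_j=2^{(n-1)(n-2)/2}\Bigl(\prod_{j\geq2}u_j\Bigr)^{n}.
\]
Since $n$ is odd and squares of units are $1$ in $\F_3$, this is $(-1)^{(n-1)(n-2)/2}\prod_{j\geq2}u_j$. Comparing the two expressions gives $\prod_{j\geq2}u_j\equiv(-1)^{(n-1)(n-2)/2+s-1}$. With $n=4K-1$, one checks that $(n-1)(n-2)/2=(2K-1)(4K-3)$ is odd. Also $s-1=(M-1)/2=4K^2-3K$, which is even because $K$ is even. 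So the total exponent is odd, giving \eqref{eq:symmetric-square-residue}.

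The main obstacle is the rank count: showing that the projected vectors span at least $s=(M+1)/2$ dimensions exactly. This hinges on the precise valuation bookkeeping, namely $\nu_3(\det\bm G)=N$ and $\nu_3(\det\bm\Delta)=n+1$, so these computations must be checked carefully.
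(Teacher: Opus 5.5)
Your proposal is correct and follows the paper's proof essentially step for step. It uses the same Gram identity $h^4\bigl((n+1)\bm I_N+\bm1\bm1^{\mathsf T}\bigr)=\bm C^{\mathsf T}\bm\Delta\bm C$ in the $\mathrm F$-orthogonal basis $\bm v_i\bm v_i^{\mathsf T}$, $\bm v_i\bm v_j^{\mathsf T}+\bm v_j\bm v_i^{\mathsf T}$, the same valuation count $\nu_3(\det\bm C)=(N-n-1)/2$, the same rank bound after deleting the $n$ rows involving $\bm v_1$, and Lemma~\ref{lem:constant-pairing-determinant} over $\F_3$ with $s$ odd; the only cosmetic difference is that you merge the paper's two facts $\det\bm D_0\equiv-\prod_{j\geq2}u_j$ and $\det\bm D_0\equiv1\pmod3$ into a single sign-exponent computation.
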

\begin{proof}
Denote $N=|X|=\frac{n(n+1)}2$ and write $X=\{\bm x_1,\ldots,\bm x_N\}$. Define the symmetric matrices
\[
\bm F_j=\bm v_j\bm v_j^{\mathsf T}\in\mathbb{R}^{n\times n},\qquad
\bm F_{i,j}=\bm v_i\bm v_j^{\mathsf T}
+\bm v_j\bm v_i^{\mathsf T}\in\mathbb{R}^{n\times n}\quad (i<j).
\]
A simple calculation shows
\[
\langle\bm F_j,\bm F_j\rangle_{\mathrm F}
=\langle\bm v_j,\bm v_j\rangle^2,\qquad
\langle\bm F_{i,j},\bm F_{i,j}\rangle_{\mathrm F}
=2\langle\bm v_i,\bm v_i\rangle\langle\bm v_j,\bm v_j\rangle.
\]
By orthogonality of the $\bm v_j$, the Gram matrix of
\begin{equation}\label{xueq3}
(\bm F_j)_{2\leq j\leq n},\quad
(\bm F_{i,j})_{2\leq i<j\leq n},\quad
\bm F_1,\quad(\bm F_{1,j})_{2\leq j\leq n},
\end{equation}
is the integer diagonal matrix
\begin{equation}\label{xueq4}
\bm D=\operatorname{diag}\bigl(
\bm D_0,\
9u_1^2,\ (6u_1u_j)_{j=2}^n\bigr)\in\mathbb{R}^{N\times N},
\end{equation}
where
\begin{equation}\label{eq:reduced-unit-Gram}
\bm D_0=\operatorname{diag}
\bigl((u_j^2)_{j=2}^n,(2u_i u_j)_{2\leq i<j\leq n}\bigr)\in\mathbb{R}^{(N-n)\times (N-n)}.
\end{equation}
A direct calculation shows that
\[
\det\bm D_0
=2^{\frac{(n-1)(n-2)}{2}}\left(\prod_{j=2}^n u_j\right)^n
\overset{(a)}\equiv-\prod_{j=2}^n u_j\pmod3,
\]
where ($a$) follows from the fact that both $n$ and $\frac{(n-1)(n-2)}{2}$ are odd, and $u_j^2\equiv1\pmod3$. Hence, to prove the lemma, it is enough to prove
\begin{equation}\label{eq:reduced-unit-determinant}
\det\bm D_0\equiv1\pmod3.
\end{equation}

We now prove \eqref{eq:reduced-unit-determinant}. For each $1\leq j\leq N$, write $h\cdot \bm x_j=\sum_{i=1}^n c_{i,j}\bm v_i$ with $c_{i,j}\in\Z$. Then
\[
h^2\bm x_j\bm x_j^{\mathsf T}
=\sum_{i=1}^n c_{i,j}^2\bm F_i
+\sum_{1\leq k<\ell\leq n}c_{k,j}c_{\ell,j}\bm F_{k,\ell}
\in\mathbb{R}^{n\times n},\quad \forall 1\leq j\leq N.
\]
Let $\bm C\in\mathbb{R}^{N\times N}$ be the integer matrix whose $j$-th column lists these coefficients in the displayed order in \eqref{xueq3}. Then $\bm C^{\mathsf T}\bm D\bm C\in\mathbb{R}^{N\times N}$ is the Gram matrix of $h^2\bm x_1\bm x_1^{\mathsf T},\ldots, h^2\bm x_N\bm x_N^{\mathsf T}$. On the other hand, by \eqref{eq:scaled} we have
\begin{equation*}
\langle \bm x_i\bm x_i^{\mathsf T},\bm x_j\bm x_j^{\mathsf T}\rangle_{\mathrm F}=	\langle \bm x_i,\bm x_j \rangle^2
=\begin{cases}
n+2 & \text{if $i=j$,}\\
1	& \text{if $i\neq j$.}
\end{cases}
\end{equation*}
Hence, we have
\begin{equation}\label{eq:symmetric-Gram-identity}
\frac{1}{h^4}\cdot \bm C^{\mathsf T}\bm D\bm C=(n+1)\bm I_N+\bm1\bm1^{\mathsf T},
\end{equation}
where $\bm1\in\mathbb{R}^{N}$ is the column of $N$ ones. Let $\bm R\in\mathbb{R}^{(N-n)\times N}$ be the first $N-n$ rows of $\bm C\in\mathbb{R}^{N\times N}$. We claim that
\begin{equation}\label{xueq7}
\bm R^{\mathsf T}\bm D_0\bm R
\equiv\bm1\bm1^{\mathsf T}\pmod3.
\end{equation}
and
\begin{equation}\label{xueq6}
\operatorname{rank}_{\F_3}\bm R
\geq s:=\frac{N-n+1}{2}
\end{equation}
Then we can choose $s$ columns $\bm z_1,\ldots,\bm z_{s}$ of $\bm R$ that are linearly independent modulo $3$. Equation~\eqref{xueq7} gives
\[
\bm z_i^{\mathsf T}\bm D_0\bm z_j\equiv1\pmod3
\qquad \forall 1\leq i,j\leq s.
\]
The matrix $\bm D_0$ has order $2s-1$ and is nonsingular over $\F_3$. Applying Lemma \ref{lem:constant-pairing-determinant} over the field $\F_3$, we obtain that there is a nonzero $a\in\F_3\setminus\{0\}$ such that
\begin{equation}\label{eq:reduced-unit-determinant2}
\det\bm D_0\equiv (-1)^{s-1} a^2\pmod3.
\end{equation}
Since $m(m+1)$ is even, we have $n=4m(m+1)-1\equiv7\pmod8$ and $N-n=\frac{n(n-1)}{2}\equiv1\pmod4$. Hence, $s$ is odd and $(-1)^{s-1}=1$. Since every nonzero square in $\F_3$ is $1$, we have $a^2\equiv1\pmod3$. Combining \eqref{eq:reduced-unit-determinant2} with $(-1)^{s-1}=1$ and $a^2\equiv1\pmod3$, we arrive at  \eqref{eq:reduced-unit-determinant}.

It remains to prove \eqref{xueq7} and \eqref{xueq6}. We first prove \eqref{xueq7}. All entries of $\bm D_0$ on the diagonal are nonzero modulo $3$, and the last $n$ diagonal entries of $\bm D$ are divisible by $3$. Consequently, for every $i,j$,
\begin{equation}\label{eqxu5}
(\bm C^{\mathsf T}\bm D\bm C)_{i,j}
\equiv\sum_{\ell=1}^{N-n} C_{\ell,i} (\bm D_0)_{\ell,\ell}C_{\ell,j}
=(\bm R^{\mathsf T}\bm D_0\bm R)_{i,j}\pmod3.
\end{equation}
As $h^4\equiv1$ and $n+1=4m(m+1)\equiv0\pmod3$, combining \eqref{eq:symmetric-Gram-identity} and \eqref{eqxu5} we obtain
\begin{equation*}
\bm R^{\mathsf T}\bm D_0\bm R
\equiv \bm C^{\mathsf T}\bm D\bm C
\equiv h^4\bm1\bm1^{\mathsf T}
\equiv \bm1\bm1^{\mathsf T}\pmod3.
\end{equation*}
We arrive at \eqref{xueq7}.

We next prove \eqref{xueq6}. Equation \eqref{eq:symmetric-Gram-identity} implies that $\bm C$ is invertible. Deleting $n$ rows lowers the row-space dimension by at most $n$. Hence,
\begin{equation}\label{eq:reduced-rank-bound}
\operatorname{rank}_{\F_3}\bm R
\geq\operatorname{rank}_{\F_3}\bm C-n
\geq N-\nu_3(\det\bm C)-n,
\end{equation}
where the last inequality follows from Lemma \ref{lem:rank-reduction-three}. We next prove $\nu_3(\det\bm C)=(N-n-1)/2$. Taking determinants in \eqref{eq:symmetric-Gram-identity} gives
\begin{equation}\label{xueq8}
(\det\bm C)^2\det\bm D=h^{4N} (n+1)^{N-1} (n+1+N).
\end{equation}
Recall that $u_j\in\Z$ and $3\nmid u_j$. By the definition of $\bm D$ in \eqref{xueq4}, we have $\nu_3(\det\bm D)=2+(n-1)=n+1$. Since $3\nmid h$, comparing the powers of $3$ in  \eqref{xueq8} we obtain
\begin{equation}\label{xueq9}
2\cdot \nu_3(\det\bm C)+ n+1=(N-1)\cdot \nu_3(n+1)	+ \nu_3(n+1+N).
\end{equation}
Since $n+1=4m(m+1)$, we have $\nu_3(n+1)=1$. Since $n+1+N=(n+1)\frac{d^2}{2}$ and $d^2=n+2\equiv1\pmod3$, we have $\nu_3(n+1+N)=1$. Substituting $\nu_3(n+1)=\nu_3(n+1+N)=1$ into \eqref{xueq9}, we obtain $\nu_3(\det\bm C)=s-1$. Combining with \eqref{eq:reduced-rank-bound}, we arrive at \eqref{xueq6}. This completes the proof.

\end{proof}

\begin{lemma}\label{lem:determinant-residue}
Suppose that $m\equiv0\pmod3$, $\nu_3(m(m+1))=1$, and $\operatorname{Syl}_{3}(\sqrt{2}\Lambda^*/\Gamma)\ne\{\Gamma\}$. Then
\begin{equation}\label{eq:determinant-residue}
\frac{\det\Lambda}{3}\equiv2\pmod3.
\end{equation}
\end{lemma}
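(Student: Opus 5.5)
By Lemma~\ref{lem:three-primary}~(ii) we have $\nu_3(\det\Lambda)=1$, so $(\det\Lambda)/3$ is an integer prime to $3$. The plan is to compute its residue from a $3$-adically diagonalized Gram matrix of $\Lambda$, and then use Lemma~\ref{lem:symmetric-square-residue} to evaluate the product of the unit parts.

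\textbf{Step 1: diagonalize.} Choose a $\Z$-basis $\bm w_1,\ldots,\bm w_n$ of $\Lambda$ (it has rank $n$). Its Gram matrix $\bm B$ is integral, since $\Lambda$ is integral by \eqref{eq:scaled}, and $\det\bm B=\det\Lambda$. Apply Lemma~\ref{lem:rational-diagonalization} to obtain an integral $\bm P$ with $3\nmid\det\bm P$ and $\bm P^{\mathsf T}\bm B\bm P=\operatorname{diag}(3^{a_j}u_j)$. Define $\bm v_j$ by $(\bm v_1\ \cdots\ \bm v_n)=(\bm w_1\ \cdots\ \bm w_n)\bm P$. These are pairwise orthogonal vectors of $\Lambda$, and
\[
(\det\bm P)^2\det\Lambda=\prod_j 3^{a_j}u_j .
\]
Since $\nu_3(\det\Lambda)=1$ and $3\nmid\det\bm P$, exactly one $a_j$ equals $1$ and all the others vanish. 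Reorder so that the squared norms are $3u_1,u_2,\ldots,u_n$ with $3\nmid u_j$. Because $(\det\bm P)^2\equiv1\pmod3$, this gives
\[
\frac{\det\Lambda}{3}\equiv\prod_{j=1}^n u_j\pmod3 .
\]

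\textbf{Step 2: verify the hypothesis of Lemma~\ref{lem:symmetric-square-residue}.} Put $h=|\det\bm P|$, which is prime to $3$. Then $h\bm P^{-1}$ is integral (it is $\pm$ the adjugate of $\bm P$), so $h\bm w_i$ lies in the $\Z$-span of the $\bm v_j$, and hence so does $h\bm x$ for every $\bm x\in X\subset\Lambda$. The lemma therefore applies and gives $\prod_{j=2}^n u_j\equiv-1\pmod3$. It follows that
\[
\frac{\det\Lambda}{3}\equiv -u_1\pmod3 ,
\]
so it remains to show $u_1\equiv1\pmod3$.

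\textbf{Step 3: determine $u_1$.} This is the main obstacle, and here the $3$-primary information enters. Since $\bm v_1$ is orthogonal to $\bm v_2,\ldots,\bm v_n$, the vector $\bm\alpha=\bm v_1/3$ pairs integrally with $\bm v_1$ (giving $u_1$) and with every $\bm v_j$, $j\ge2$ (giving $0$). Therefore $\langle\bm\alpha,h\bm x\rangle\in\Z$ for every $\bm x\in X$. As $\langle\bm\alpha,\bm x\rangle\in\frac13\Z$ and $3\nmid h$, in fact $\langle\bm\alpha,\bm x\rangle\in\Z$, so $\bm\alpha\in\Lambda^*$.

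Next consider $3\sqrt2\bm\alpha=\sqrt2\bm v_1$. We have $2\bm v_1\in\Lambda_+$, because its coefficient sum is even. Hence $9\sqrt2\bm\alpha=3\sqrt2\bm v_1\in\Gamma$, so $\sqrt2\bm\alpha+\Gamma$ lies in $\operatorname{Syl}_3(\sqrt2\Lambda^*/\Gamma)$.

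Its squared norm is $\langle\bm\alpha,\bm\alpha\rangle=u_1/3\notin\Z$, so the class is nonidentity. Lemma~\ref{lem:three-primary}~(i) then forces $u_1/3\in\frac13+\Z$, that is, $u_1\equiv1\pmod3$. Combining this with Step 2 gives $(\det\Lambda)/3\equiv-1\equiv2\pmod3$.

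The delicate points are the orthogonality bookkeeping in Step 3 and the check that the class is genuinely $3$-primary. The hypothesis $\operatorname{Syl}_3\neq\{\Gamma\}$ is used only through Lemma~\ref{lem:three-primary}~(ii), to pin $\nu_3(\det\Lambda)=1$ in Step 1.
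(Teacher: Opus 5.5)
Your proof is correct and follows the paper's argument: the same $3$-adic diagonalization via Lemma~\ref{lem:rational-diagonalization}, the same use of $\nu_3(\det\Lambda)=1$ to force $a_1=1$ and $a_j=0$ for $j\geq2$, and the same appeal to Lemma~\ref{lem:symmetric-square-residue} with $h=|\det\bm P|$. The only difference is how you show $u_1\equiv1\pmod3$. The paper substitutes $\bm\alpha=\frac{h}{3u_1}\bm v_1$ directly into \eqref{eq:J}, whereas you take $\bm v_1/3$, check that $\sqrt2\bm v_1/3+\Gamma$ is a nonidentity $3$-primary class, and cite Lemma~\ref{lem:three-primary}~(i); that lemma rests on the same integrality condition, so this is a harmless detour (note that $\sqrt2\bm v_1\in\Gamma$ already, so the class has order $3$).
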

\begin{proof}
Choose a lattice basis $\bm e_1,\ldots,\bm e_n$ of $\Lambda$ with the Gram matrix $\bm B=(\langle \bm e_i,\bm e_j\rangle)\in\mathbb{R}^{n\times n}$. Since $\Lambda$ is integral, the Gram matrix $\bm B$ is a nonsingular symmetric integral matrix. By Lemma \ref{lem:rational-diagonalization}, there is an integral matrix $\bm P\in\mathbb{Z}^{n\times n}$ with $3\nmid\det\bm P$ such that
\begin{equation*}
\bm P^{\mathsf T}\bm B\bm P
=\operatorname{diag} (3^{a_1}u_1,\ldots,3^{a_n}u_n)\in\mathbb{R}^{n\times n},
\qquad a_j\in\Z_{\geq0},\quad u_j\in\Z,\quad 3\nmid u_j.
\end{equation*}
Permuting the columns of $\bm P$ permutes the diagonal entries of $\bm P^{\mathsf T}\bm B\bm P$ in the same way and changes $\det\bm P$ at most by a sign, preserving $3\nmid\det\bm P$. Thus, after such a permutation and a corresponding relabeling of the pairs $(a_j,u_j)$, we may assume $a_1\geq\cdots\geq a_n\geq0$. Set $h=|\det\bm P|$ and define
\[
[\bm v_1,\ldots,\bm v_n]=[\bm e_1,\ldots,\bm e_n]\cdot \bm P.
\]
Then $\bm v_1 ,\ldots, \bm v_n$ are orthogonal vectors in $\Lambda$ with $\langle \bm v_j,\bm v_j \rangle=3^{a_j}u_j$. Taking determinants of $\bm P^{\mathsf T}\bm B\bm P$ gives
\begin{equation}\label{xueq1}
\det\Lambda=\det\bm B
=\frac{3^{\sum_{j=1}^n a_j}\prod_{j=1}^n u_j}{h^2}.
\end{equation}
Lemma \ref{lem:three-primary} (ii) gives $\nu_3(\det\Lambda)=1$. Since $3\nmid h$ and $3\nmid u_j$, comparing the powers of $3$ in \eqref{xueq1} yields $\sum_{j=1}^n a_j=1$. Since the $a_j$ are nonnegative integers and $a_1\geq\cdots\geq a_n$, it follows that $a_1=1$ and $a_j=0$ for every $j>1$. Then \eqref{xueq1} implies
\begin{equation}
\frac{\det\Lambda}{3}=\frac{\prod_{j=1}^n u_j}{h^2}.
\end{equation}
Since $\det\Lambda$ is a positive integer and $\nu_3(\det\Lambda)=1$,  we see that $\frac{\det\Lambda}{3}$ is an integer. We claim that
\begin{subequations}
\begin{align}
u_1&\equiv1\pmod3 \label{xueq:u1}\\
\prod_{j=2}^n u_j&\equiv-1\pmod3 \label{xueq:u2}.
\end{align}
\end{subequations}
Since $3\nmid h$, we have $h^2\equiv 1\pmod3$. Then it follows from \eqref{xueq:u1} and \eqref{xueq:u2} that
\[
\frac{\det\Lambda}{3}\equiv-1\equiv2\pmod3,
\]
as desired.

It remains to prove \eqref{xueq:u1} and \eqref{xueq:u2}. We first prove \eqref{xueq:u1}. Denote
\[
\bm\alpha=\frac{h}{3u_1}\cdot \bm v_1.
\]
Using $\bm e_k=\sum_{j=1}^n(\bm P^{-1})_{j,k}\bm v_j$ and orthogonality, we obtain
\[
\langle\bm\alpha,\bm e_k\rangle
=\frac{h}{3u_1}\sum_{j=1}^n(\bm P^{-1})_{j,k}
\langle\bm v_1,\bm v_j\rangle
=h\cdot (\bm P^{-1})_{1,k}\in\Z.
\]
Here, we use the fact that the matrix $h\cdot \bm P^{-1}=\pm\operatorname{adj} (\bm P)$ is integral, since $\bm P$ is integral. Hence $\bm\alpha\in\Lambda^*$, and $\langle\bm\alpha,\bm\alpha\rangle=h^2/(3u_1)$. Substitution in \eqref{eq:J} gives
\[
\frac{m(m+1)h^2(h^2-du_1)}{36u_1^2}\in\Z.
\]
The denominator contains exactly two factors of $3$, while $m(m+1)$ contains exactly one and $3\nmid hu_1$. Therefore $3\mid(h^2-du_1)$. As $h^2\equiv1$ and $d=2m+1\equiv1\pmod3$, this proves $u_1\equiv1\pmod3$.

We next prove \eqref{xueq:u2}. Recall that $3\nmid h$ and the matrix $h\cdot \bm P^{-1}=\pm\operatorname{adj} (\bm P)$ is integral. If $\bm x=\sum_{k=1}^n z_k\bm e_k\in X$ with $z_k\in\Z$, then
\[
h\cdot\bm x
=\sum_{k=1}^n  (h\cdot z_k)\cdot\bm e_k
=\sum_{j=1}^n\left(\sum_{k=1}^n
h\cdot(\bm P^{-1})_{j,k}\cdot z_k\right)\bm v_j.
\]
Every coefficient is an integer. Hence, the same $h$ verifies the hypothesis of Lemma \ref{lem:symmetric-square-residue}, giving
\[
\prod_{j=2}^n u_j\equiv-1\pmod3.
\]
This completes the proof.

\end{proof}

\begin{proof}[Proof of Theorem~\ref{thm:second}]
Suppose that the asserted design exists. By Lemma \ref{lem:local} (ii), every odd-primary subgroup of $\sqrt{2}\Lambda^*/\Gamma$ is trivial except possibly the one at $3$ and those at primes $p\equiv1\pmod{12}$. The latter primes are $1$ modulo $4$, so their Gauss sums are real by Lemma~\ref{lem:real-gauss}. By Lemma \ref{lem:local} (i), every representative in the $2$-primary subgroup has integer squared norm. Hence, each summand in its Gauss sum is $1$ or $-1$, and $G_\Gamma(\operatorname{Syl}_{2}(\sqrt{2}\Lambda^*/\Gamma))\in\R$. If the $3$-primary subgroup were trivial as well, then every primary Gauss sum would be real. Equation \eqref{eq:gauss-primary-product}  gives
\[
G_\Gamma(\sqrt{2}\Lambda^*/\Gamma)
=\prod_{p\text{ prime}}G_\Gamma(\operatorname{Syl}_p(\sqrt{2}\Lambda^*/\Gamma))\in\R,
\]
contrary to \eqref{eq:required-phase}. Hence $\operatorname{Syl}_{3}(\sqrt{2}\Lambda^*/\Gamma)\ne\{\Gamma\}$, and Lemma \ref{lem:three-primary} (ii) and Lemma~\ref{lem:determinant-residue} give
\[
\nu_3(\det\Lambda)=1
\quad\text{and}\quad
\frac{\det\Lambda}{3}\equiv2\pmod3.
\]

On the other hand, Lemma \ref{lem:even} (iii) gives
\[
\det\Lambda=2^{n-1}|\sqrt{2}\Lambda^*/\Gamma|.
\]
For every prime $p\geq5$ with $p\not\equiv1\pmod{12}$, the hypothesis $\nu_p(m(m+1))\leq1$ and Lemma \ref{lem:local} (ii) give $\operatorname{Syl}_p(\sqrt{2}\Lambda^*/\Gamma)=\{\Gamma\}$. The order of this Sylow subgroup is $p^{\nu_p(|\sqrt{2}\Lambda^*/\Gamma|)}$, so for every prime $p\geq5$ with $p\not\equiv1\pmod{12}$, we have
\[
\nu_p(\det\Lambda)
=\nu_p(|\sqrt{2}\Lambda^*/\Gamma|)=0.
\]
Thus, every prime divisor of $\det\Lambda$ other than $2$ and $3$ is $1$ modulo $12$. Since $\det\Lambda$ is a positive integer and $\nu_3(\det\Lambda)=1$, its prime factorization is
\[
\det\Lambda
=2^{\nu_2(\det\Lambda)}\cdot3
\prod_{\substack{p\text{ prime}\\p\mid\det\Lambda\\p\equiv1\ (\mathrm{mod}\ 12)}}
p^{\nu_p(\det\Lambda)}.
\]
Dividing by $3$ removes exactly this single factor of $3$. Moreover, Lemma \ref{lem:local} (i) implies $\nu_2(\det\Lambda)\in2\Z_{\geq0}$, and hence
\[
2^{\nu_2(\det\Lambda)}
=(2^2)^{\nu_2(\det\Lambda)/2}\equiv1\pmod3.
\]
For each prime occurring in the product, $p\equiv1\pmod{12}$ implies $p\equiv1\pmod3$, so $p^{\nu_p(\det\Lambda)}\equiv1\pmod3$. Consequently,
\[
\frac{\det\Lambda}{3}
=2^{\nu_2(\det\Lambda)}
\prod_{\substack{p\text{ prime}\\p\mid\det\Lambda\\p\equiv1\ (\mathrm{mod}\ 12)}}
p^{\nu_p(\det\Lambda)}
\equiv1\pmod3.
\]
The product is finite, and an empty product is understood to be $1$. This contradicts $\frac{\det\Lambda}{3}\equiv2\pmod3$ obtained above, completing the proof.
\end{proof}

\appendix
\section{Proofs of lemmas in Section~\ref{sec:second}}

\subsection{Proof of Lemma \ref{lem:constant-pairing-determinant}}
\label{app:constant-pairing-determinant}
\begin{proof}[Proof of Lemma \ref{lem:constant-pairing-determinant}]

Write $\langle\bm x,\bm y\rangle_{\bm D} :=\bm x^{\mathsf T}\bm D\bm y$ and denote $\bm\ell_i=\bm z_{i+1}-\bm z_1$ for $1\leq i\leq s-1$. Subtracting the first vector from the others is reversible, so $\bm z_1,\bm\ell_1,\ldots,\bm\ell_{s-1}$ are independent. Equation~\eqref{eq:constant-pairing-hypothesis} gives
\[
\langle\bm z_1,\bm z_1\rangle_{\bm D}=1,\qquad
\langle\bm z_1,\bm\ell_i\rangle_{\bm D}=1-1=0,\qquad
\langle\bm\ell_i,\bm\ell_j\rangle_{\bm D}=1-1-1+1=0.
\]
Extend these $s$ vectors to a basis by adjoining $\bm h_1,\ldots,\bm h_{s-1}$. For each $j$, replace $\bm h_j$ by $\bm h_j-\langle\bm z_1,\bm h_j\rangle_{\bm D}\bm z_1$. These elementary column operations preserve the basis. The pairing of the new vector with $\bm z_1$ is
\[
\langle\bm z_1,\bm h_j\rangle_{\bm D}
-\langle\bm z_1,\bm h_j\rangle_{\bm D}
\langle\bm z_1,\bm z_1\rangle_{\bm D}=0.
\]
Continue to denote the new vectors by $\bm h_j$, and set
\[
\bm S=[\,\bm z_1\ \bm\ell_1\ \cdots\ \bm\ell_{s-1}\
\bm h_1\ \cdots\ \bm h_{s-1}\,]\in\mathbb{F}^{(2s-1)\times (2s-1)}.
\]
For any two columns $\bm v,\bm w$ of $\bm S$, the corresponding entry of $\bm S^{\mathsf T}\bm D\bm S$ is $\langle\bm v,\bm w\rangle_{\bm D}$. Hence, the pairings above give
\begin{equation}\label{eq:constant-pairing-block}
 \bm S^{\mathsf T}\bm D\bm S
 =\begin{pmatrix}
    1&\bm0&\bm0\\
    \bm0&\bm0&\bm A\\
    \bm0&\bm A^{\mathsf T}&\bm B
   \end{pmatrix},
\end{equation}
where
\[
\bm A=(a_{i,j})_{i,j=1}^{s-1},\quad
a_{i,j}=\langle\bm\ell_i,\bm h_j\rangle_{\bm D},\qquad
\bm B=(b_{i,j})_{i,j=1}^{s-1},\quad
b_{i,j}=\langle\bm h_i,\bm h_j\rangle_{\bm D}.
\]
The matrix $\bm B$ is symmetric because $\bm D$ is symmetric. Expand the determinant in \eqref{eq:constant-pairing-block} along its first row. In the remaining matrix, exchange columns $i$ and $s-1+i$ for each $i=1,\ldots,s-1$. These $s-1$ column exchanges give
\begin{align*}
(\det\bm S)^2\det\bm D
&=\det\begin{pmatrix}\bm0&\bm A\\
\bm A^{\mathsf T}&\bm B\end{pmatrix}
=(-1)^{s-1}\det\begin{pmatrix}\bm A&\bm0\\
\bm B&\bm A^{\mathsf T}\end{pmatrix}
=(-1)^{s-1} (\det\bm A)^2.
\end{align*}
The left side is nonzero, so $\det\bm A\ne0$. Taking $a=\det\bm A/\det\bm S$ proves the lemma.

\end{proof}

\subsection{Proof of Lemma \ref{lem:rational-diagonalization}}
\label{app:rational-diagonalization}
\begin{proof}[Proof of Lemma \ref{lem:rational-diagonalization}]

We use induction on $n$. If $n=1$, write $b_{1,1}=3^{a_1}u_1$ with $a_1\geq0$ and $3\nmid u_1$, and take $\bm P=(1)$. For $n>1$, write $\bm B=(b_{i,j})_{i,j=1}^n$, where each $b_{i,j}\in \Z$. Let $e\geq0$ be the largest integer such that $3^e$ divides every entry of $\bm B$, i.e.,
\[
e=\min_{\substack{1\leq i,j\leq n\\b_{i,j}\ne0}}\nu_3(b_{i,j}).
\]

We claim that there is an integer matrix $\bm P_0\in\Z^{n\times n}$ such that $\det\bm P_0\in\{1,-1\}$ and the $(1,1)$-entry of $\bm C:=\bm P_0^{\mathsf T}\bm B\bm P_0$ is not divisible by $3^{e+1}$. Note that $3^e$ divides every entry of $\bm C$ because $\bm P_0$ is an integer matrix. Denote $\bm C=(c_{i,j})_{i,j=1}^n$. Write $c_{1,1}=3^{e}u$ and $c_{1,j}=3^e\cdot t_j$ for $2\leq j\leq n$, where $u,t_j\in\Z$ and $3\nmid u$. Define
\[
\bm H=
\begin{pmatrix}
1& \bm t\\
\bm 0&u\cdot \bm I_{n-1}\\
\end{pmatrix}\in\Z^{n\times n},
\]
where $\bm t=(-t_2, -t_3 ,\ldots,-t_n)$. Then we have $\det\bm H=u^{n-1}$ and
\begin{equation*}
\bm H^{\mathsf T}\bm C\bm H
=\begin{pmatrix}3^e u&\bm0\\\bm0&\bm D\end{pmatrix},
\end{equation*}
where $\bm D=(d_{i,j})_{i,j=2}^{n}\in\Z^{(n-1)\times (n-1)}$ and $d_{i,j}:=u^2c_{i,j}-3^e u t_i t_j$. The formula for $d_{i,j}$ shows that $\bm D$ is integral and symmetric. Moreover,
\[
3^e u\det\bm D
=\det(\bm H^{\mathsf T}\bm C\bm H)
 =(\det\bm H)^2(\det\bm P_0)^2\det\bm B\ne0,
\]
so $\bm D$ is nonsingular. Applying the induction hypothesis to $\bm D$, we see that there is an integral matrix $\bm P_1\in\mathbb{Z}^{(n-1)\times (n-1)}$ with $3\nmid\det\bm P_1$ such that
\begin{equation*}
\bm P_1^{\mathsf T}\bm D\bm P_1
=\operatorname{diag} (3^{a_2}u_2,\ldots,3^{a_n}u_n),
\qquad a_j\in\Z_{\geq 0},\quad u_j\in\Z,\quad 3\nmid u_j.
\end{equation*}
Let
\begin{equation*}
\bm P=	\bm P_0\bm H
\begin{pmatrix}
1 & \bm 0 \\
\bm 0 & \bm P_1
\end{pmatrix}\in\mathbb{Z}^{n\times n}.
\end{equation*}
Then
\begin{equation*}
\begin{aligned}
\bm P^{\mathsf T}\bm B\bm P
&=\begin{pmatrix}
1 & \bm 0 \\
\bm 0 & \bm P_1^{\mathsf T}
\end{pmatrix}\bm H^{\mathsf T}\bm P_0^{\mathsf T}\bm B\bm P_0\bm H
\begin{pmatrix}
1 & \bm 0 \\
\bm 0 & \bm P_1
\end{pmatrix}
= \begin{pmatrix}
1 & \bm 0 \\
\bm 0 & \bm P_1^{\mathsf T}
\end{pmatrix}\begin{pmatrix}3^e u&\bm0\\\bm0&\bm D\end{pmatrix}
\begin{pmatrix}
1 & \bm 0 \\
\bm 0 & \bm P_1
\end{pmatrix}\\
&=\begin{pmatrix}3^e u&\bm0\\\bm0&\bm P_1^{\mathsf T}\bm D\bm P_1\end{pmatrix}
=\operatorname{diag} (3^{e}u,3^{a_2}u_2,\ldots,3^{a_n}u_n).
\end{aligned}
\end{equation*}
Note that
\begin{equation*}
\det\bm P=\det\bm P_0 \det\bm H \det\bm P_1=u^{n-1}\det\bm P_0  \det\bm P_1.
\end{equation*}
Since $3\nmid u$, $3\nmid \det\bm P_1$ and $\det\bm P_0\in\{1,-1\}$, we have $3\nmid \det\bm P$. We arrive at our conclusion.

It remains to prove that such  $\bm P_0\in\Z^{n\times n}$ exists. If there is a diagonal entry $b_{k,k}$ not divisible by $3^{e+1}$, then we take $\bm P_0$ as the permutation matrix which permutes the first and the $k$-th column. Then $\det \bm P_0=\pm 1$ and $(\bm P_0^{\mathsf T}\bm B\bm P_0)_{1,1}=b_{k,k}$,  which is not divisible by $3^{e+1}$. We next consider the case when $3^{e+1}$ divides every diagonal entry of $\bm B$. By the definition of $e$, there exists $i\ne j$ with $3^{e+1}\nmid b_{i,j}$. Take $\bm Q_1=\bm I_n+\bm E_{j,i}$, where $\bm E_{j,i}$ denotes the $n\times n$ matrix whose $(j,i)$ entry is $1$ and whose other entries are $0$. Then we have $\det\bm Q_1=1$ and
\[
(\bm Q_1^{\mathsf T}\bm B\bm Q_1)_{i,i}=b_{i,i}+2b_{i,j}+b_{j,j}\not\equiv 0\pmod{3^{e+1}}.
\]
Let $\bm Q_2$ be the permutation matrix which permutes the first and the $i$-th column. Take $\bm P_0=\bm Q_1\bm Q_2$. Then $\bm P_0$ is an integral matrix. Moreover, $\det \bm P_0=\pm 1$ and $(\bm P_0^{\mathsf T}\bm B\bm P_0)_{1,1}=(\bm Q_1^{\mathsf T}\bm B\bm Q_1)_{i,i}=b_{i,i}+2b_{i,j}+b_{j,j}$,  which is not divisible by $3^{e+1}$. This completes the proof.

\end{proof}

\subsection{Proof of Lemma \ref{lem:rank-reduction-three}}
\label{app:rank-reduction-three}
\begin{proof}[Proof of Lemma \ref{lem:rank-reduction-three}]

Denote $\rho=\operatorname{rank}_{\F_3}\bm C$. Reorder the columns $\bm c_1,\ldots,\bm c_t$ so that the first $\rho$ form a basis of the column space modulo $3$. This changes the determinant only by a sign. We continue to denote the reordered matrix by $\bm C$. For $j>\rho$, choose integers $\lambda_{i,j}\in\{0,1,2\}$ with
\[
\bm c_j\equiv\sum_{i=1}^{\rho}\lambda_{i,j}\bm c_i\pmod3.
\]
Every entry of the difference is divisible by $3$, so there is an integer column $\bm e_j$ such that
\[
\bm c_j=\sum_{i=1}^{\rho}\lambda_{i,j}\bm c_i+3\bm e_j.
\]
Expand the determinant in its last $t-\rho$ columns using this equality. Any term selecting some $\bm c_i$, $i\leq\rho$, in a later column has two identical columns and is zero. Therefore
\[
\det\bm C
=3^{t-\rho}
\det(\bm c_1,\ldots,\bm c_\rho,\bm e_{\rho+1},\ldots,\bm e_t).
\]
The last determinant is an integer. Hence, $3^{t-\rho}\mid\det\bm C$, proving the claim. If $\rho=0$, the sums are empty and every column is divisible by $3$. If $\rho=t$, the claim is simply $\nu_3(\det\bm C)\geq0$.

\end{proof}

\section*{Use of Artificial Intelligence}
During the preparation of this manuscript, the author used ChatGPT 5.6 Sol for assistance with filling in technical details and improving the language. All mathematical ideas, proof strategies, and proofs presented in this manuscript are human-generated and were developed and verified by the author. The author takes full responsibility for the correctness and originality of the results.

\end{document}